\numberwithin{equation}{section}
\theoremstyle{plain}
\newtheorem{thm}{Theorem}[section]
\newtheorem{lem}{Lemma}[section]
\theoremstyle{definition}
\newtheorem{df}{Definition}[section]
\newtheorem{rem}{Remark}[section]
\newtheorem{ex}{Example}[section]
\newcommand{\FF}{\mathbb{F}}
\newcommand{\ZZ}{\mathbb{Z}}
\newcommand{\CC}{\mathbb{C}}
\newcommand{\I}{\mathrm{I}}
\newcommand{\II}{\mathrm{II}}
\newcommand{\III}{\mathrm{III}}
\newcommand{\IV}{\mathrm{IV}}
\def\bm#1{\mathbf{#1}}
\DeclareMathOperator{\supp}{supp}
\DeclareMathOperator{\wt}{wt}
\begin{document}


\title{Jacobi polynomials and design theory I}

\author[Chakraborty]{Himadri Shekhar Chakraborty}
\address{Department of Mathematics, Shahjalal University of Science and Technology, Sylhet 3114, BANGLADESH}
\email{himadri-mat@sust.edu}
 
\author[Miezaki]{Tsuyoshi Miezaki}
\address{School of Fundamental Science and Engineering, Waseda University, Tokyo 169-8555, Japan}
\email{miezaki@waseda.jp}

\author[Oura]{Manabu Oura}
\address{School of Natural Science and Technology, Kanazawa University, Ishikawa 920-1192,Japan}
\email{oura@se.kanazawa-u.ac.jp}

\author[Tanaka]{Yuuho Tanaka$^\ast$}
\address{Graduate School of Science and Engineering, Waseda University, Tokyo, 169-8555, Japan}
\email{tanaka\_yuuho\_dc@akane.waseda.jp}

\thanks {*Corresponding author}

\date{}
\maketitle

\begin{abstract}
In this paper, we introduce the notion of Jacobi polynomials of a code with multiple 
reference vectors, and give the MacWilliams type identity for it.
Moreover, we derive a formula to obtain the Jacobi polynomials using the 
Aronhold polarization operator.
Finally, we describe some facts obtained from Type III and Type IV codes
that interpret the relation between the Jacobi polynomials and designs.

\end{abstract}

{\small
	\noindent
	{\bfseries Key Words:}
	Codes, Jacobi polynomials, designs, invariant theory.\\ \vspace{-0.15in}
	
	\noindent
	2010 {\it Mathematics Subject Classification}. 
	Primary 11T71;
	Secondary 94B05, 11F11.\\ \quad
}

\section{Introduction}

A.~Bonnecaze et al.~\cite{BMS1999} took the notion of 
Jacobi polynomials, a celebrated generalization of 
weight enumerators~\cite{HP, MS1977} that were 
introduced by M.~Ozeki~\cite{Ozeki} for codes as an analogue
to Jacobi forms~\cite{BO,EZ} as a powerful generalization of 
modular form~\cite{Duke, Runge} of Lattices~\cite{CS}. 
They gave a formula to compute the Jacobi polynomials of a
binary code as an application of combinatorial $t$-designs using an operator,
known as Aronhold polarization operator. 
Many authors studied the combinatorial $t$-designs and discussed 
their properties in~\cite{AM1969, Delsarte, Miezaki2021, MMN2021}
that were derived from codes and their analogies. 
Moreover, P.J.~Cameron~\cite{Cameron2009} gave the notion of generalized $t$-designs
and discussed its properties. 
Furthermore, A.~Bonnecaze et al.~\cite{BMS1999}
constructed various types of designs such as 
group divisible designs, packing designs and covering designs.
To establish the relationship between these designs and the Jacobi polynomials,
they studied Jacobi polynomials for Type~II codes through 
invariant theory~\cite{Gleason, NRS}.

In this paper, 
we give the generalizations and analogues of some results in~\cite{BMS1999}.
We define the Jacobi polynomials with multiple reference vectors 
for codes, and give the MacWilliams type identity for it.
As an analogue of the combinatorial interpretation of the polarization that
was given in~\cite{BMS1999}, is given here for codes that holds generalized $t$-designs
for every given weight of the codewords.
In addition, 
we study some Type~III (resp. Type~IV) codes of specific lengths,
and determine the polynomials that generate the space of Jacobi polynomials for 
a Type~III (resp. Type~IV) code
with respect to reference vectors of a particular length. 
Moreover, we observe from the examples that
the number of blocks of a packing (resp. covering) design 
correspond to the coefficients in Jacobi polynomials. 

This paper is organized as follows. 
In Section~\ref{Sec:Preli}, 
we discuss the basic definitions and properties of codes that needed to understand this paper. 
In Section~\ref{Sec:MacWilliams}, 
we give the MacWilliams type identity (Theorem~\ref{Thm:MacWilliams}) for 
the Jacobi polynomials of a code with multiple reference vectors.
In Section~\ref{Sec:DesignJac}, 
we see how polarization operator acts to obtain the Jacobi polynomials with multiple reference vectors (Theorem~\ref{Thm:Main1}, Theorem~\ref{Thm:Main2}).
In Section~\ref{Sec:Examples},
we disclose some facts between a Type~III (resp. Type~IV) code of specific length 
and designs of various kinds with the help of the Jacobi polynomials.
Finally, we conclude the paper with some remarks in Section~\ref{Sec:Conclusion}. 
%

All computer calculations in this paper were done with the help of
Magma~\cite{Magma}.

\section{Preliminaries}\label{Sec:Preli}

Let $\FF_{q}$ be a finite field of order~$q$, 
where $q$ is a prime power. 
Then $\FF_{q}^{n}$ denotes the vector space of dimension $n$ over $\FF_{q}$.
The elements of $\FF_{q}^{n}$ are known as \emph{vectors}.
The \emph{Hamming weight} of 
$\bm{u} = (u_1,\dots, u_n)\in \FF_{q}^{n}$ 
is denoted by~$\wt(\bm{u})$ and
defined to be the number of $i$'s such that $u_{i} \neq 0$.
Let $\bm{u} = (u_1,\dots, u_n)$ 
and $\bm{v} = (v_1,\dots,v_n)$ 
be the vectors of $\FF_{q}^{n}$. 
Then the \emph{inner product} of two vectors 
$\bm{u},\bm{v} \in \FF_{q}^{n}$ 
is given by
\[
	\bm{u} \cdot \bm{v} 
	:= 
	u_{1} v_{1} + \dots + u_{n}v_{n}.
\]
If $q$ is an even power of an arbitrary prime~$p$,
then it is convenient to consider another inner product
given by
\[
	\bm{u} \cdot \bm{v}
	:=
	u_{1}\overline{v_{1}} + \cdots + u_{n}\overline{v_{n}},
\]
where $\overline{v_{i}}:= {v_{i}}^{\sqrt{q}}$.
An $\FF_q$-linear code of length~$n$ is a vector subspace of $\FF_{q}^{n}$. 
The elements of an~$\FF_{q}$-linear code are called \emph{codewords}. 
The \emph{dual code} of an $\FF_{q}$-linear code~$C$ 
of length~$n$ is defined by
\[
	C^\perp 
	:= 
	\{
		\bm{v}\in \FF_{q}^{n} 
		\mid 
		\bm{u} \cdot \bm{v} = \bm{0} 
		\text{ for all } 
		\bm{u}\in C
	\}. 
\]
An $\FF_{q}$-linear code $C$ is called \emph{self-dual} if $C = C^\perp$. 
It is well known that 
the length~$n$ of a self-dual code over~$\FF_q$ is even and the dimension is $n/2$.
To study self-dual codes in detail, we refer the readers 
to~\cite{BMS1972, Gleason, MMS1972, NRS}. 
A self-dual code~$C$ over~$\FF_2$ or~$\FF_4$ of length $n\equiv 0 \pmod 2$ having even weight is called \emph{Type}~$\I$ and \emph{Type}~$\IV$, respectively. A self-dual code $C$ over~$\FF_2$ of length $n\equiv 0\pmod 8$ is called \emph{Type}~$\II$ if the weight of each codeword of~$C$ is multiple of~$4$. Finally, a self-dual code $C$ over~$\FF_3$ of length $n\equiv 0\pmod 4$ is called \emph{Type}~$\III$ if the weight of each codeword of~$C$ is multiple of~$3$.


\begin{df}
	Let $C$ be an $\FF_{q}$-linear code of length~$n$.
	We denote by $A_{i}^{C}$ the number of codewords in $C$ having 
	Hamming weight $i$.
	Then the \emph{weight enumerator} of $C$ 
	is defined as
	\[
		W_{C}(x,y)
		:=
		\sum_{\bm{u} \in C}
		x^{n-\wt(\bm{u})}
		y^{\wt(\bm{u})}
		=
		\sum_{i=0}^{n}
		A_{i}^{C}
		x^{n-i}
		y^{i}.
	\]
\end{df}

\begin{df}\label{Def:JacOne}
	Let $C$ be an $\FF_{q}$-linear code of length~$n$. Then 
	the \emph{Jacobi polynomial} attached to a set $T$ of 
	coordinate places of the code $C$ is defined as follows:
	\[
		J_{C,T}(w,z,x,y) 
		:=
		\sum_{\bm{u}\in C}
		w^{m_0(\bm{u})}z^{m_1(\bm{u})}x^{n_0(\bm{u})}y^{n_1(\bm{u})},
	\]
	where $T\subseteq [n]$, and for $\bm{u}\in C$,
	\begin{align*}
	m_0(\bm{u}) & := \#\{i\in T \mid u_i=0\},\\
	m_{1}(\bm{u}) & := \#\{i \in T \mid u_{i} \neq 0\},\\
	n_0(\bm{u}) & := \#\{i\in[n]\setminus T \mid u_i= 0\},\\
	n_{1}(\bm{u})  & := \#\{i\in[n]\setminus T \mid u_i \neq 0\}.
	\end{align*}
\end{df}

\begin{rem}
	If $T \subseteq [n]$ is empty, then $J_{C,T}(w,z,x,y)=W_C(x,y)$.
\end{rem}

\section{MacWilliams type identity}\label{Sec:MacWilliams}

The MacWilliams type identity for the Jacobi polynomial of 
an $\FF_{q}$--linear code with one reference vector was given in~\cite{Ozeki}. 
In this section, we give the MacWilliams type identity for
the Jacobi polynomial of an $\FF_{q}$--linear code with multiple reference
vectors.

\begin{df}
	Let $C$ be an $\FF_{q}$--linear code of length~$n$. 
	Then the \emph{Jacobi polynomial} of $C$ 
	with respect to $\ell$ reference vectors 
	$\bm{w}_{1},\ldots,\bm{w}_{\ell} \in \FF_{q}^{n}$ 
	is denoted by
	$J_{C,\bm{w}_{1},\ldots,\bm{w}_{\ell}}(\{x_{\bm{a}}\}_{\bm{a} \in \FF_{2}^{\ell+1}})$ and 	
	defined as
	\begin{align*}
		J_{C,\bm{w}_{1},\ldots,\bm{w}_{\ell}}
		(\{x_{\bm{a}}\}_{\bm{a} \in \FF_{2}^{\ell+1}})
		& := 
		\sum_{\bm{u}\in C} 
		\prod_{\bm{a} \in \FF_{2}^{\ell+1}}
		x_{\bm{a}}^{N_{\bm{a}}(\bm{u},\bm{w}_{1},\ldots,\bm{w}_{\ell})}.
	\end{align*}
	Here 
	for $\bm{a} \in \FF_{2}^{\ell}$,
	we denote
	$$N_{\bm{a}}(\bm{w}_{1},\ldots,\bm{w}_{\ell}) := \#\{i \in [n] \mid \bm{a} = (\phi(w_{1i}),\ldots,\phi(w_{gi}))\},$$ 
	where $\bm{w}_{j} = (w_{j1},\ldots,w_{jn})$, 
	$\phi(w_{ji}) = 1$ if $w_{ji} \neq 0$ and $\phi(w_{ji}) = 0$
	if~$w_{ji} = 0$.
\end{df}

Note that if $\ell=1$, 
the above definition is completely equivalent to the Jacobi polynomial with one 
reference vector (Definition~\ref{Def:JacOne}). 

%
%

Let $\FF_{q}$ be a finite field, where $q =  p^{f}$ for some prime number $p$. 
A \emph{character} of $\FF_{q}$ is a homomorphism from the additive group~$\FF_{q}$ 
to the multiplicative group of non-zero complex numbers. 
We review~\cite{CM2021,MS1977} to introduce some fixed non-trivial characters over~$\FF_{q}$. 
Now let $F(x)$ be a primitive irreducible polynomial 
of degree $f$ over $\FF_{p}$ and let $\lambda$ be a root of $F(x)$.
Then any element $a \in \FF_{q}$ has a unique representation as:
\begin{equation*}\label{EquAlpha}
	a 
	=
	a_{0} + a_{1} \lambda	
	+ a_{2} \lambda^{2}
	+ \cdots +
	a_{f-1} \lambda^{f-1},
\end{equation*} 
where $a_{i} \in \FF_{p}$.
For $b \in \FF_{q}$, 
we define $\chi_{b}(a) := \zeta_{p}^{a_{0}b_{0}+\cdots+a_{f-1}b_{f-1}}$, 
where $\zeta_{p}$ is the $p$-th primitive root $e^{2{\pi}i/p}$ of unity.
When $b \neq 0$, then $\chi_{b}$ is a non-trivial character of $\FF_{q}$.
Let $\chi$ be a non-trivial character of $\FF_{q}$. 
Then for any $a \in \FF_{q}$, 
we have the following property:
\[
	\sum_{b \in \FF_{q}}
	\chi(ab) 
	:= 
	\begin{cases}
		q & \mbox{if} \quad a = 0, \\
		0 & \mbox{if} \quad a \neq 0.
	\end{cases} 
\]

\begin{lem}[\cite{MS1977}]\label{Lem:DualIden}
	Let $C$ be an $\FF_{q}$-linear code of length~$n$. 
	For $\bm{v} \in \FF_{q}^{n}$, define
	\[
		\delta_{C^{\perp}}(\bm{v}) 
		:= 
		\begin{cases}
			1 & \mbox{if } \bm{v} \in C^{\perp}, \\
			0 & \mbox{otherwise}.
	\end{cases} 
	\]
	Then we have the following identity:
	\[
		\delta_{C^\perp}(\bm{v})
		= 
		\dfrac{1}{|C|} 
		\sum_{\bm{u} \in C} 
		\chi(\bm{u} \cdot \bm{v}).
	\]
\end{lem}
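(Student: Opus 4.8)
The plan is to compute the right-hand side $\frac{1}{|C|}\sum_{\bm u\in C}\chi(\bm u\cdot\bm v)$ directly, splitting into the two cases $\bm v\in C^\perp$ and $\bm v\notin C^\perp$, and to verify that the value agrees with $\delta_{C^\perp}(\bm v)$ in each.

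First I would dispose of the case $\bm v\in C^\perp$. By the definition of $C^\perp$ we have $\bm u\cdot\bm v=0$ for every $\bm u\in C$, so $\chi(\bm u\cdot\bm v)=\chi(0)=1$ for each summand and the sum equals $|C|$; dividing by $|C|$ gives $1$, which is exactly $\delta_{C^\perp}(\bm v)$ in this case.

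Next I would treat the case $\bm v\notin C^\perp$, which is where the nontriviality of $\chi$ enters. Consider the map $\ell_{\bm v}\colon C\to\FF_q$ given by $\ell_{\bm v}(\bm u)=\bm u\cdot\bm v$; this is $\FF_q$-linear in $\bm u$ (for either choice of inner product, since the entries of $\bm v$, possibly conjugated, are fixed scalars), and it is not identically zero precisely because $\bm v\notin C^\perp$. Hence its image is a nonzero subspace of the one-dimensional space $\FF_q$, so $\ell_{\bm v}$ is surjective and its kernel has index $q$ in $C$; in particular every fibre $\ell_{\bm v}^{-1}(a)$ has the same size $|C|/q$. Grouping the sum according to the common value $a=\bm u\cdot\bm v$ then yields
\[
\sum_{\bm u\in C}\chi(\bm u\cdot\bm v)
=\sum_{a\in\FF_q}\bigl|\ell_{\bm v}^{-1}(a)\bigr|\,\chi(a)
=\frac{|C|}{q}\sum_{a\in\FF_q}\chi(a),
\]
and the displayed orthogonality property of a nontrivial character (with coefficient $1\neq 0$) gives $\sum_{a\in\FF_q}\chi(a)=0$. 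Thus the sum vanishes and division by $|C|$ produces $0=\delta_{C^\perp}(\bm v)$.

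Combining the two cases yields the identity. There is no real obstacle here: the argument is a routine two-case character-sum computation, and the only point that needs a word of justification is the equidistribution of the fibres of $\ell_{\bm v}$ in the second case, which follows immediately from $\ell_{\bm v}$ being a surjective $\FF_q$-linear map onto $\FF_q$.
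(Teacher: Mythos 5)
Your proof is correct. Note that the paper itself gives no proof of this lemma at all---it is simply quoted from \cite{MS1977}---so there is no in-paper argument to compare against; your two-case computation (all summands equal $1$ when $\bm{v}\in C^{\perp}$; equidistribution of the fibres of the linear functional $\ell_{\bm{v}}\colon\bm{u}\mapsto\bm{u}\cdot\bm{v}$ together with $\sum_{a\in\FF_q}\chi(a)=0$ when $\bm{v}\notin C^{\perp}$) is the standard one, and you rightly flag the only two points needing care, namely that $\ell_{\bm{v}}$ is $\FF_q$-linear in $\bm{u}$ even for the Hermitian inner product and that a nonzero image in the one-dimensional space $\FF_q$ forces surjectivity. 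A frequently seen variant of the second case avoids counting fibres: choose $\bm{u}_0\in C$ with $\chi(\bm{u}_0\cdot\bm{v})\neq 1$ (possible since $\ell_{\bm{v}}$ is onto and $\chi$ is nontrivial) and observe that replacing $\bm{u}$ by $\bm{u}+\bm{u}_0$ shows the sum $S$ satisfies $S=\chi(\bm{u}_0\cdot\bm{v})\,S$, hence $S=0$; this buys a slightly shorter argument, while yours makes the value of every fibre count explicit, and the two are essentially equivalent.
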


Now we give the MacWilliams type identity
for the Jacobi polynomial of an $\FF_{q}$--linear code 
with respect to multiple reference vectors.

\begin{thm}[MacWilliams Identity]\label{Thm:MacWilliams}
	Let $C$ be an $\FF_{q}$--linear code of length $n$.
	Again let $\chi$ be a non-trivial character of $\FF_{q}$.
	Let 
	$J_{C,\bm{w}_{1},\ldots,\bm{w}_{\ell}} 
	(\{x_{\bm{a}}\}_{\bm{a} \in \FF_{2}^{\ell+1}})$ 
	be the Jacobi polynomial of $C$ 
	with respect to the reference vectors 
	$\bm{w}_{1},\ldots,\bm{w}_{\ell}\in \FF_{q}^{n}$. 
	Then
	\begin{align*}
		J&_{C^{\perp},\bm{w}_{1},\ldots,\bm{w}_{\ell}} 
		(\{x_{\bm{a}}\}_{\bm{a} \in \FF_{2}^{\ell+1}})\\
		& = 
		\dfrac{1}{|C|} 
		J_{C,\bm{w}_{1},\ldots,\bm{w}_{\ell}}
		\left(
			\left\{
				\sum_{b \in \FF_{q}}
				\chi(a_{1} b) 
				x_{(\phi(b), \phi(a_{2}), \ldots, \phi(a_{\ell+1}))}
			\right\}_{\bm{a} \in \FF_{q}^{\ell+1}}
		\right).
	\end{align*}
\end{thm}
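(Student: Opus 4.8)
The plan is to mimic the classical proof of the MacWilliams identity, using the character-sum expression for the indicator function of the dual code from Lemma~\ref{Lem:DualIden}. Concretely, I would start from the definition
\[
	J_{C^{\perp},\bm{w}_{1},\ldots,\bm{w}_{\ell}}
	(\{x_{\bm{a}}\})
	=
	\sum_{\bm{v}\in \FF_{q}^{n}}
	\delta_{C^{\perp}}(\bm{v})
	\prod_{\bm{a}}x_{\bm{a}}^{N_{\bm{a}}(\bm{v},\bm{w}_{1},\ldots,\bm{w}_{\ell})},
\]
where the sum now ranges over the entire ambient space, and then substitute the identity $\delta_{C^{\perp}}(\bm{v}) = |C|^{-1}\sum_{\bm{u}\in C}\chi(\bm{u}\cdot\bm{v})$. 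After interchanging the two (finite) sums, the goal becomes to show that for each fixed $\bm{u}\in C$,
\[
	\sum_{\bm{v}\in\FF_{q}^{n}}
	\chi(\bm{u}\cdot\bm{v})
	\prod_{\bm{a}}x_{\bm{a}}^{N_{\bm{a}}(\bm{v},\bm{w}_{1},\ldots,\bm{w}_{\ell})}
	=
	\prod_{\bm{a}}
	\left(
		\sum_{b\in\FF_{q}}
		\chi(a_{1}b)\,x_{(\phi(b),\phi(a_{2}),\ldots,\phi(a_{\ell+1}))}
	\right)^{N_{\bm{a}}(\bm{u},\bm{w}_{1},\ldots,\bm{w}_{\ell})},
\]
which is exactly the claimed substitution applied to $J_{C,\ldots}$ summand-by-summand.

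The key step is to recognize that everything factors over the $n$ coordinate positions. Write $\bm{u}=(u_1,\ldots,u_n)$, $\bm{w}_k=(w_{k1},\ldots,w_{kn})$, and for a variable vector $\bm{v}=(v_1,\ldots,v_n)$ note that $\chi(\bm{u}\cdot\bm{v})=\prod_{i=1}^{n}\chi(u_iv_i)$ and that $\prod_{\bm{a}}x_{\bm{a}}^{N_{\bm{a}}(\bm{v},\bm{w}_1,\ldots,\bm{w}_\ell)}=\prod_{i=1}^{n}x_{(\phi(v_i),\phi(w_{1i}),\ldots,\phi(w_{\ell i}))}$, since each position $i$ contributes exactly one factor, indexed by the tuple $\bm{a}$ recording the supports at $i$. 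Hence the left-hand side is
\[
	\sum_{\bm{v}\in\FF_{q}^{n}}\ \prod_{i=1}^{n}
	\chi(u_iv_i)\,x_{(\phi(v_i),\phi(w_{1i}),\ldots,\phi(w_{\ell i}))}
	=
	\prod_{i=1}^{n}
	\left(
		\sum_{b\in\FF_{q}}
		\chi(u_ib)\,x_{(\phi(b),\phi(w_{1i}),\ldots,\phi(w_{\ell i}))}
	\right),
\]
by the standard distributive-law interchange of a sum over a product set with a product of one-variable sums. Now I would group the $n$ factors according to the value $\bm{a}=(\phi(u_i),\phi(w_{1i}),\ldots,\phi(w_{\ell i}))\in\FF_2^{\ell+1}$: there are precisely $N_{\bm{a}}(\bm{u},\bm{w}_1,\ldots,\bm{w}_\ell)$ positions $i$ with that tuple, and for each such position the inner sum equals $\sum_{b\in\FF_q}\chi(a_1 b)\,x_{(\phi(b),\phi(a_2),\ldots,\phi(a_{\ell+1}))}$ because $\phi(u_i)=a_1$ forces $\chi(u_i b)$ — wait, one must be slightly careful here: $\phi(u_i)=a_1$ does not determine $u_i$, only whether it is zero. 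I would handle this by splitting into the cases $a_1=0$ (so $u_i=0$, $\chi(u_ib)=1$, and the sum is $\sum_b x_{(\phi(b),\ldots)}$, matching $\sum_b\chi(0\cdot b)x_{(\phi(b),\ldots)}$) and $a_1=1$ (so $u_i\neq0$; as $b$ ranges over $\FF_q$ so does $u_i b$, and a reindexing $b\mapsto u_i^{-1}b$ — which preserves $\phi$ since $u_i\neq 0$ — shows $\sum_b\chi(u_ib)x_{(\phi(b),\ldots)}=\sum_b\chi(b)x_{(\phi(b),\ldots)}=\sum_b\chi(a_1 b)x_{(\phi(b),\ldots)}$). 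Either way the inner sum depends on $i$ only through $\bm{a}$, giving the desired product over $\bm{a}\in\FF_2^{\ell+1}$.

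Assembling: the left-hand side of the theorem equals $|C|^{-1}\sum_{\bm{u}\in C}\prod_{\bm{a}\in\FF_2^{\ell+1}}\big(\sum_{b\in\FF_q}\chi(a_1b)\,x_{(\phi(b),\phi(a_2),\ldots,\phi(a_{\ell+1}))}\big)^{N_{\bm{a}}(\bm{u},\bm{w}_1,\ldots,\bm{w}_\ell)}$, which by the definition of the Jacobi polynomial is exactly $|C|^{-1}J_{C,\bm{w}_1,\ldots,\bm{w}_\ell}$ evaluated at the substituted variables $x_{\bm{a}}\mapsto\sum_{b}\chi(a_1b)x_{(\phi(b),\phi(a_2),\ldots,\phi(a_{\ell+1}))}$. (The statement writes the substitution with $\bm{a}$ ranging over $\FF_q^{\ell+1}$, but since only the $\phi$-images of the coordinates enter, this is the same data as $\bm{a}\in\FF_2^{\ell+1}$.) I expect the only real subtlety to be the bookkeeping in the $a_1=1$ case — the reindexing argument showing the inner sum is independent of the actual nonzero value $u_i$ — together with being careful that the "expansion of a product of sums" step is legitimate because all index sets are finite; the rest is formal manipulation of finite sums and products.
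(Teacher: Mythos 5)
Your proof is correct and follows essentially the same route as the paper's own proof: expand over the ambient space with $\delta_{C^{\perp}}$, apply Lemma~\ref{Lem:DualIden}, factor coordinatewise, interchange the sum over $\bm{v}$ with the product over positions, and regroup the factors according to the tuple $\bm{a}$. Your explicit reindexing argument in the $a_{1}=1$ case (and the remark that only the $\phi$-images matter) simply spells out a point the paper leaves implicit when it writes the regrouped product over $\bm{a}\in\FF_{q}^{\ell+1}$.
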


\begin{proof}
	By Lemma~\ref{Lem:DualIden}, we can write
	
	\begin{align*}
		J&_{C^{\perp},\bm{w}_{1},\ldots,\bm{w}_{\ell}}
		(\{x_{\bm{a}}\}_{\bm{a} \in \FF_{2}^{\ell+1}}) \\
		& = 
		\sum_{\bm{u} \in C^{\perp}} 
		\prod_{\bm{a} \in \FF_{2}^{\ell+1}} 
		x_{\bm{a}}
		^{N_{\bm{a}}(\bm{u},\bm{w}_{1},\ldots,\bm{w}_{\ell})} \\
		& = 
		\sum_{\bm{v} \in \FF_{q}^{n}} 
		\delta_{C^\perp}(\bm{v}) 
		\prod_{\bm{a} \in \FF_{2}^{\ell+1}} 
		x_{\bm{a}}
		^{N_{\bm{a}}(\bm{v},\bm{w}_{1},\ldots,\bm{w}_{\ell})} \\
		& =
		\dfrac{1}{|C|} 
		\sum_{\substack{\bm{u} \in C\\ 
			\bm{v}\in \FF_{q}^{n}}}  
		\chi(\bm{u} \cdot \bm{v}) 
		\prod_{\bm{a} \in \FF_{2}^{\ell+1}} 
		x_{\bm{a}}
		^{N_{\bm{a}}(\bm{v},\bm{w}_{1},\ldots,\bm{w}_{\ell})}\\
		& = 
		\dfrac{1}{|C|}
		\sum_{\substack{\bm{u} \in C\\ 
				\bm{v} \in \FF_{q}^{n}}} 
		\chi(u_{1}v_{1} + \cdots + u_{n}v_{n})
		\prod_{1\leq i \leq n} 
		x_{(\phi(v_{i}), \phi(w_{1i}), \ldots, \phi(w_{\ell i}))} \\
		& = 
		\dfrac{1}{|C|}
		\sum_{\bm{u} \in C}
		\prod_{1\leq i \leq n} 
		\left\{
			\sum_{v_{i} \in \FF_{q}} 
			\chi(u_{i}v_{i}) 
			x_{(\phi(v_{i}), \phi(w_{1i}), \ldots, \phi(w_{\ell i}))} 
		\right\}\\
		& = 
		\dfrac{1}{|C|}
		\sum_{\bm{u} \in C}
		\prod_{\bm{a} \in \FF_{q}^{\ell+1}} 
		\left(
			\sum_{b \in \FF_{q}}
			\chi(a_{1} b) 
			x_{(\phi(b), \phi(a_{2}), \ldots, \phi(a_{\ell+1}))}
		\right)
		^{N_{\bm{a}}(\bm{u},\bm{w}_{1},\ldots,\bm{w}_{\ell})} \\
		& = 
		\dfrac{1}{|C|}
		J_{C,\bm{w}_{1},\ldots,\bm{w}_{\ell}}
		\left(
			\left\{
				\sum_{b \in \FF_{q}}
				\chi(a_{1} b) 
				x_{(\phi(b), \phi(a_{2}), \ldots, \phi(a_{\ell}+1))}
			\right\}_{\bm{a} \in \FF_{q}^{\ell+1}}
		\right).	
	\end{align*}
	Hence the proof is completed.
\end{proof}

\section{Generalized $t$-designs and Jacobi polynomials}\label{Sec:DesignJac}

Bonnecaze et al. introduced an operator called \emph{polarization operator} in~\cite{BMS1999}, 
and using this operator, they gave a formula to evaluate the Jacobi polynomial of a binary code from the weight enumerator of the code. 
In this section, we give a generalized form of the polarization operation, 
and present an application of this operator in the evaluation of the Jacobi polynomial
of a non-binary code associated to the multiple reference vectors.

First we recall the definition of generalized $t$-designs 
from~\cite{Cameron2009}.
Let $t$, $k$, $\lambda$ be the integers such that $\lambda > 0$ and $k > t > 0$. 
Again let $\bm{k} := (k_{1},\ldots,k_{n})$ such that $k = \sum_{i = 1}^{n} k_{i}$,
$\bm{v} := (v_{1},\ldots,v_{n})$ such that $v_{i} \geq k_{i}$ for all $i$.
Let $\bm{X} := (X_{1},\ldots,X_{n})$, where $X_{i}$'s are pairwise disjoint sets with $|X_{i}| = v_{i}$ for all $i$ and  
$$\mathcal{B} \subseteq \binom{X_{1}}{k_{1}} \times \cdots \times \binom{X_{n}}{k_{n}}.$$

\begin{df}	
	A $t$-$(\bm{v},\bm{k},\lambda)$ \emph{design} or a \emph{generalized} $t$-\emph{design} (in short) is a pair $\mathcal{D} := (\bm{X},\mathcal{B})$ 
	with the following property:
	if $\bm{t} := (t_{1},\ldots,t_{n})$ such that $t = \sum_{i = 1}^{n} t_{i}$ satisfying $0 \leq t_{i} \leq k_{i}$ for all $i$, 
	then for any choice $\bm{T} := (T_{1},\ldots,T_{n})$ with $T_{i} \in \binom{X_{i}}{t_{i}}$ for all $i$, 
	there are precisely $\lambda$ members $\bm{K} := (K_{1},\ldots,K_{n}) \in \mathcal{B}$ for which $T_{i} \subseteq K_{i}$ for all $i$.
\end{df}

Note that in the case when $\bm{k} = (k)$ and $\bm{v} = (v)$, 
this is precisely the definition of a combinatorial 
$t$-$(v,k,\lambda)$ design or a $t$-\emph{design} (in short). 
We can construct the generalized $t$-designs from codes as follows.

Let $\bm{v} = (v_{1},\ldots,v_{\ell})$ such that $\sum_{i = 1}^{\ell} v_{i} = n$ and
$\bm{X} = (X_{1},\ldots,X_{\ell})$ of pairwise disjoint sets $X_{i} \subseteq [n]$ with $|X_{i}| = v_{i}$.
Again let $\bm{u} = (u_{1},\ldots,u_{n})\in \FF_{q}^{n}$. 
Then for $X \subseteq [n]$, we define
\begin{align*}
	\supp_{X}(\bm{u}) &:= \{i \in X \mid u_{i} \neq 0\},\\
	\bm{K}(\bm{u}) &:= (\supp_{X_{1}}(\bm{u}),\ldots,\supp_{X_{\ell}}(\bm{u})),\\
	\wt_{X}(\bm{u}) &:= |\supp_{X}(\bm{u})|.
\end{align*}

Again for any positive integer $k$, let 
$\bm{k} = (k_{1},\ldots,k_{\ell})$ such that 
$\sum_{i = 1}^{\ell} k_{i} = k$.
Let $C$ be an $\FF_{q}$--linear code of length~$n$. Then
\begin{align*}
	C_{\bm{k}} 
	& := 
	\{\bm{u} \in C \mid \wt_{X_{i}}(\bm{u}) = k_{i} \mbox{ for all } i\},\\  \mathcal{B}(C_{\bm{k}}) 
	& := 
	\{\bm{K}(\bm{u}) \mid \bm{u} \in C_{\bm{k}}\}.
\end{align*}
In general, $\mathcal{B}(C_{\bm{k}})$ is a multi-set. We say $C_{\bm{k}}$ is a $t$-$(\bm{v},\bm{k},\lambda)$ design if $(\bm{X},\mathcal{B}(C_{\bm{k}}))$ is a $t$-$(\bm{v},\bm{k},\lambda)$ design. We say a code is \emph{generalized} $t$-\emph{homogeneous} if the codewords of every given weight~$k$ hold a 
$t$-$(\bm{v},\bm{k},\lambda)$ design.

From the above discussion we have the following result. 
We omit the proof of the theorem since it follows from the above definitions. 

\begin{thm}\label{Thm:JacToDesign}
	Let $C$ be an $\FF_{q}$-linear code of length~$n$.
	Let~$t, k, \lambda$ be the integers such that $\lambda > 0$ and $k > t > 0$. 
	Let $\bm{v} := (v_{1},\ldots,v_{\ell})$ such that $v_{1}+\cdots + v_{\ell} = n$.
	Let $\bm{X} := (X_{1},\ldots,X_{\ell})$ of pairwise disjoint set 
	$X_{1},\ldots, X_{\ell} \subseteq [n]$ 
	with $|X_{1}| = v_{1},\ldots,|X_{\ell}| = v_{\ell}$.
	Let $\bm{k} := (k_{1},\ldots,k_{\ell})$ such that $k_{1}+\cdots + k_{\ell} = k$.
	Then the set of codewords of $C$ form a 
	$t$-$(\bm{v},\bm{k},\lambda)$ design for every given weight $k$ with 
	$\bm{t} = (t_{1},\ldots,t_{\ell})$ such $t_{1}+\cdots + t_{\ell} = t$
	satisfying $0\leq t_{i} \leq k_{i}$ for all $i$
	if and only if the Jacobi polynomial
	$J_{C,\bm{w}_{1},\ldots,\bm{w}_{\ell}}$ 
	of $C$ associated to the reference vectors 
	$\bm{w}_{1},\ldots,\bm{w}_{\ell} \in \FF_{q}^{n}$
	such that $\wt_{X_{i}}(\bm{w}_{i}) = t_{i} = \wt(\bm{w}_{i})$ for all~$i$,
	is invariant.
\end{thm}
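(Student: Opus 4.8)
The proof hinges on interpreting the phrase ``the Jacobi polynomial is invariant'' correctly: it should mean that $J_{C,\bm{w}_{1},\ldots,\bm{w}_{\ell}}$ depends only on the weight profile $(\wt_{X_{i}}(\bm{w}_{i}))_{i}$ of the reference vectors and not on the particular choice of $\bm{w}_{1},\ldots,\bm{w}_{\ell}$ with that profile. My plan is to unwind both sides of the claimed equivalence into statements about the quantities
\[
  \lambda(\bm{K}) := \#\{\bm{u}\in C_{\bm{k}} \mid \bm{K}(\bm{u}) = \bm{K}\}
\]
and the ``number of members of $\mathcal{B}(C_{\bm{k}})$ containing a given $\bm{T}$'' counts, and then observe that each side says exactly that a certain family of linear functionals on the (multi)set $\mathcal{B}(C_{\bm{k}})$ is constant. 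The main work is purely combinatorial bookkeeping, so I will organize it as two implications.

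First I would fix a weight $k$, a decomposition $\bm{k}=(k_{1},\ldots,k_{\ell})$, and a target $\bm{t}=(t_{1},\ldots,t_{\ell})$ with $0\le t_{i}\le k_{i}$. For the direction (design $\Rightarrow$ Jacobi invariant): given reference vectors $\bm{w}_{1},\ldots,\bm{w}_{\ell}$ with $\wt_{X_{i}}(\bm{w}_{i}) = t_{i} = \wt(\bm{w}_{i})$, I would expand the homogeneous degree-$k$ part of $J_{C,\bm{w}_{1},\ldots,\bm{w}_{\ell}}$ and show that each monomial's coefficient is a sum over codewords $\bm{u}\in C_{\bm{k}}$ of a factor that depends only on the overlaps $|\supp_{X_{i}}(\bm{u})\cap \supp(\bm{w}_{i})|$ and $k_{i}$, $v_{i}$, $t_{i}$. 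Grouping the sum by the block pattern $\bm{K}(\bm{u})\in \mathcal{B}(C_{\bm{k}})$, the coefficient becomes $\sum_{\bm{K}} \lambda(\bm{K}) \prod_{i} (\text{overlap term depending on } |T_{i}\cap K_{i}|)$ where $T_{i} = \supp_{X_{i}}(\bm{w}_{i})\in \binom{X_{i}}{t_{i}}$. By the generalized design property, inclusion--exclusion on the $T_{i}$'s shows the number of $\bm{K}\in\mathcal{B}(C_{\bm{k}})$ with a prescribed value of each $|T_{i}\cap K_{i}|$ is independent of the choice of $\bm{T}$; hence so is the coefficient, proving invariance.

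For the converse (Jacobi invariant $\Rightarrow$ design), I would run the argument in reverse: invariance of the degree-$k$ part of the Jacobi polynomial for all $\bm{w}_{i}$ with the prescribed profile forces, for each fixed block-overlap pattern, the count $\#\{\bm{K}\in\mathcal{B}(C_{\bm{k}}) : |T_{i}\cap K_{i}| = s_{i}\ \forall i\}$ to be the same for all choices of $\bm{T}=(T_{1},\ldots,T_{\ell})$ with $|T_{i}| = t_{i}$. Specializing to $s_{i}=t_{i}$ for all $i$ (i.e.\ $T_{i}\subseteq K_{i}$), this is exactly the statement that $\bm{T}$ is contained in precisely $\lambda$ members of $\mathcal{B}(C_{\bm{k}})$ for a constant $\lambda$, which is the definition of a $t$-$(\bm{v},\bm{k},\lambda)$ design; applying this for every weight $k$ gives the generalized $t$-homogeneity claimed. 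To make the ``forces'' step rigorous I would note that the map sending the vector of counts $(N_{\bm{s}})_{\bm{s}}$ indexed by overlap patterns to the vector of Jacobi coefficients is given by a tensor product over $i$ of triangular (hence invertible) matrices of binomial type, so the two families of data determine each other.

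The main obstacle I anticipate is the invertibility / injectivity argument in the last step: one must choose enough monomials from the Jacobi polynomial (equivalently, enough distinct variables $x_{\bm{a}}$ with $\bm{a}\in\FF_{2}^{\ell+1}$) to separate all overlap patterns $\bm{s}=(s_{1},\ldots,s_{\ell})$, and then verify that the resulting linear system is non-degenerate. This reduces, via a Vandermonde-type or binomial-inversion computation, to the classical fact underlying ordinary $t$-design theory, applied coordinate-block by coordinate-block; I would state this as a lemma and cite the analogous one-block case in~\cite{BMS1999, Cameron2009}. The rest is routine, which is presumably why the authors wrote ``We omit the proof.''
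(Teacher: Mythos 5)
The paper itself offers no argument to compare against: it states that the theorem ``follows from the above definitions'' and omits the proof, so the only question is whether your sketch is sound. Your forward direction is essentially the intended definitional unwinding and is fine, with the small caveat that the inclusion--exclusion step needs the design property for all componentwise-smaller profiles $\bm{s}\le\bm{t}$; this does follow from the fixed-$\bm{t}$ hypothesis by the usual double-counting, but you should say so rather than fold it silently into ``by the generalized design property.''

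The genuine gap is in your converse. Because each reference vector satisfies $\wt_{X_j}(\bm{w}_j)=\wt(\bm{w}_j)=t_j$, i.e.\ $\supp(\bm{w}_j)=T_j\subseteq X_j$, every coordinate outside $T_1\cup\cdots\cup T_\ell$ contributes one of the two variables $x_{(\varepsilon,0,\ldots,0)}$ no matter which block $X_i$ it lies in. Hence the monomial attached to a codeword $\bm{u}$ is determined by $\wt(\bm{u})$ and the overlaps $|\supp(\bm{u})\cap T_j|$ alone; the Jacobi polynomial never records the split $(\wt_{X_1}(\bm{u}),\ldots,\wt_{X_\ell}(\bm{u}))$. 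Invariance of $J_{C,\bm{w}_1,\ldots,\bm{w}_\ell}$ therefore yields $\bm{T}$-independence only of the aggregated counts $\#\{\bm{u}\in C:\wt(\bm{u})=k,\ T_j\subseteq\supp_{X_j}(\bm{u})\ \forall j\}$, summed over all $\bm{k}$ with $k_1+\cdots+k_\ell=k$, and not of the per-$\bm{k}$ counts $\#\{\bm{K}\in\mathcal{B}(C_{\bm{k}}):T_i\subseteq K_i\ \forall i\}$ that you claim to extract. Your proposed inversion by a tensor product of triangular binomial-type matrices only converts overlap-pattern counts into containment counts \emph{within} a fixed $\bm{k}$; it cannot recover how the weight $k$ distributes among the blocks $X_i$, since two codewords with the same total weight and the same overlaps but different splittings give identical monomials, so the map you want to invert is not injective. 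To close this you must either read the design condition in the aggregated sense (which is what $J$-invariance literally encodes, and then the equivalence is immediate but weaker than Cameron-type homogeneity of each $C_{\bm{k}}$), or supply an additional argument passing from constancy of the sum over $\bm{k}$ to constancy of each summand; a constant sum of $\bm{T}$-dependent terms need not have constant terms, so as written this step fails.
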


In the situation described in the above theorem,
the Jacobi polynomial $J_{C,\bm{w}_{1},\ldots,\bm{w}_{\ell}}$ is independent of
the choices of the reference vectors $\bm{w}_{1},\ldots,\bm{w}_{\ell} \in \FF_{q}^{n}$.
In this case, we prefer to denote the Jacobi polynomial as $J_{C,t_{1},\ldots,t_{\ell}}$.
In particular, when $\bm{k} = (k)$ and $\bm{t} = (t)$, it becomes the Jacobi polynomial
$J_{C,t}$ as in~\cite{BMS1999}.

Let $C$ be an $\FF_{q}$-linear code of length~$n$.
Then the code $C-i$ obtained from~$C$ by \emph{puncturing} at coordinate place $i$.
Now from~\cite{MS1977} we have the following lemma.

\begin{lem}\label{Lem:MacSloane}
	Let $C$ be a code of length~$n$. Then
	\[
	W_{C-i}(x,y) 
	= 
	\frac{1}{n}
	\left(	
	\frac{\partial }{\partial x} 
	+
	\frac{\partial}{\partial y}
	\right)
	W_{C}(x,y).
	\]
\end{lem}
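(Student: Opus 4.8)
The plan is to compute both sides in terms of the weight distribution $A_i^{C}$ of $C$ and match coefficients. First I would recall that puncturing at the coordinate $i$ is the map $\FF_q^n \to \FF_q^{n-1}$ deleting the $i$-th entry; a codeword $\bm u \in C$ of weight $j$ maps to a word of weight $j$ if $u_i = 0$ and of weight $j-1$ if $u_i \neq 0$. So the key quantity is $N_j$, the number of weight-$j$ codewords whose $i$-th coordinate is nonzero. The main combinatorial input — and the one place where more than bookkeeping is needed — is the averaging identity $\sum_{i=1}^{n} N_j(i) = j\,A_j^{C}$, obtained by double counting pairs $(\bm u, i)$ with $\wt(\bm u) = j$ and $u_i \neq 0$. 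I would either invoke this directly or, to keep the statement self-contained, note that $W_{C-i}$ summed over all $i$ has a clean closed form and it suffices to check the averaged identity since both sides of the claimed equation are already written in the averaged (summed-over-$i$, divided-by-$n$) form. Strictly, the lemma as stated should be read with the left side meaning $\frac1n\sum_{i=1}^n W_{C-i}(x,y)$; I would add that clarifying remark.

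Next I would carry out the right-hand side computation. Applying $\partial/\partial x + \partial/\partial y$ to $W_C(x,y) = \sum_{j=0}^{n} A_j^{C} x^{n-j} y^{j}$ gives
\[
\left(\frac{\partial}{\partial x} + \frac{\partial}{\partial y}\right) W_C(x,y)
= \sum_{j=0}^{n} A_j^{C}\bigl((n-j)\,x^{n-1-j}y^{j} + j\,x^{n-j}y^{j-1}\bigr).
\]
Collecting the coefficient of $x^{(n-1)-j}y^{j}$ in this sum, I get $(n-j)A_j^{C} + (j+1)A_{j+1}^{C}$.

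Then I would compute the left-hand side. For a fixed $i$, the weight distribution of $C-i$ is $A_j^{C-i} = (A_j^{C} - N_j(i)) + N_{j+1}(i)$: the weight-$j$ words of $C-i$ come from weight-$j$ codewords with $u_i = 0$ (there are $A_j^{C} - N_j(i)$ of them) together with weight-$(j+1)$ codewords with $u_i \neq 0$ (there are $N_{j+1}(i)$ of them). Summing over $i = 1,\dots,n$ and using $\sum_i N_j(i) = j A_j^{C}$ yields $\sum_{i=1}^n A_j^{C-i} = n A_j^{C} - j A_j^{C} + (j+1)A_{j+1}^{C} = (n-j)A_j^{C} + (j+1)A_{j+1}^{C}$. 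Dividing by $n$ matches the coefficient extracted from the right-hand side, so the two polynomials agree. I expect the only genuine obstacle to be the double-counting identity $\sum_i N_j(i) = j A_j^{C}$; everything else is coefficient bookkeeping, and the differentiation step is routine. I would also remark that this is the classical MacWilliams–Sloane identity and cite~\cite{MS1977} for the reader who wants the statement in its original form.
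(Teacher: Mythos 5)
Your proposal is correct; note that the paper itself offers no proof of this lemma, simply quoting it from~\cite{MS1977}, and your double-counting argument (compute $\sum_i N_j(i)=jA_j^C$, then match the coefficient $(n-j)A_j^C+(j+1)A_{j+1}^C$ of $x^{(n-1)-j}y^j$ on both sides) is exactly the standard proof of that classical identity. Your clarifying remark is also well taken: as literally stated the identity holds for a single $i$ only when $W_{C-i}$ is independent of $i$ (e.g.\ when the codewords of each weight hold $1$-designs, which is precisely the situation in which the lemma is invoked in Theorem~\ref{Thm:Main1}); otherwise the left side must be read as the average $\frac1n\sum_{i=1}^n W_{C-i}(x,y)$. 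One small point you leave implicit: your formula $A_j^{C-i}=(A_j^C-N_j(i))+N_{j+1}(i)$ counts punctured words with multiplicity, so either adopt that convention for $W_{C-i}$ or assume $C$ has no codeword of weight~$1$ (as is the case in the applications, where $C$ contains no codeword of weight $\leq t$), so that puncturing is injective on $C$.
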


Let $P(x_{0},x_{1})$ be a homogeneous polynomial with indeterminates 
$x_{0}$ and $x_{1}$. 
Again let $P'_{x_{0}}$(resp. $P'_{x_{1}}$) denote the partial derivative with respect to variable $x_{0}$ (resp. $x_{1}$). Define the polarization operator 
$A_{j}$ for any integer $1 \leq j \leq \ell$
as follows:
\begin{equation}\label{Equ:Operator}
	A_{j}
	\cdot
	 P(w_{j},z_{j},x_{0},x_{1}) 
	:= 
	w_{j}P'_{x_{0}}(x_{0},x_{1}) 
	+ 
	z_{j}P'_{x_{1}}(x_{0},x_{1}).
\end{equation}
Here the indeterminates in the above equation denote $x_{a}$
for some $a \in \FF_{2}^{\ell+1}$ as follows:
$w_{j} := x_{(0,0,\ldots,0,\underset{(j+1)\text{th}}{1},0,\ldots,0)}$,
$z_{j} := x_{(1,0,\ldots,0,\underset{(j+1)\text{th}}{1},0,\ldots,0)}$,
$x_{0} := x_{(0,0,\ldots,0)}$, and
$x_{1} := x_{(1,0,\ldots,0)}$.
Now we have a generalization of~\cite[Theorem 3]{BMS1999} as follows.


\begin{thm}\label{Thm:Main1}
	Every code $C$ is a generalized $1$-homogenous if and only if 
	for any $\ell$-tuple $(0,\ldots,0,1,0,\ldots,0)$ 
	having a single non-zero coordinate, 
	say $j$-th coordinate	with $1$, we have
	\begin{equation}\label{Equ:OneHomo}
		J_{C,0,\ldots,0,1,0,\ldots,0}
		=
		\frac{1}{n}
		A_{j} 
		\cdot 
		W_C.
	\end{equation}
\end{thm}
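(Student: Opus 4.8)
The plan is to unwind both sides of \eqref{Equ:OneHomo} explicitly in terms of the weight-distribution data of $C$ and compare coefficients. First I would set up notation: fix $j$ and the reference vector $\bm{w}_j$ with $\wt(\bm{w}_j)=1$, say $\supp(\bm{w}_j)=\{p\}$, so that only two of the $2^{\ell+1}$ variables actually occur in $J_{C,0,\ldots,0,1,0,\ldots,0}$, namely $w_j,z_j,x_0,x_1$ in the notation introduced after \eqref{Equ:Operator}. Writing a codeword $\bm{u}\in C$ according to whether $u_p=0$ or $u_p\neq 0$, one sees
\[
J_{C,\bm{w}_j}(w_j,z_j,x_0,x_1)
=\sum_{i=0}^n \bigl(A_i^{C,0}\, z_j^{0} w_j^{0} + \cdots\bigr),
\]
more precisely: if $u_p=0$ the monomial is $w_j\, x_0^{\,n-1-(\wt(\bm u))}x_1^{\wt(\bm u)}$ and if $u_p\neq 0$ it is $z_j\, x_0^{\,n-1-(\wt(\bm u)-1)}x_1^{\wt(\bm u)-1}$. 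Grouping, $J_{C,\bm w_j}$ is a sum of $w_j$ times a weight-type enumerator of the codewords vanishing at $p$ and $z_j$ times a shifted weight-type enumerator of those not vanishing at $p$.

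Next I would compute the right-hand side. Apply the operator $A_j$ of \eqref{Equ:Operator} directly to $W_C(x_0,x_1)=\sum_i A_i^C x_0^{\,n-i}x_1^{\,i}$:
\[
A_j\cdot W_C
= w_j\,\frac{\partial W_C}{\partial x_0} + z_j\,\frac{\partial W_C}{\partial x_1}
= w_j\sum_i (n-i)A_i^C\,x_0^{\,n-i-1}x_1^{\,i}
 + z_j\sum_i i\,A_i^C\,x_0^{\,n-i}x_1^{\,i-1}.
\]
So $\tfrac1n A_j\cdot W_C$ has the same bipartite shape: $w_j$ times $\tfrac1n\sum_i(n-i)A_i^C x_0^{n-1-i}x_1^i$ plus $z_j$ times $\tfrac1n\sum_i i A_i^C x_0^{n-i}x_1^{i-1}$. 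The theorem therefore reduces to two scalar identities, for each weight $i$: the number of codewords of weight $i$ with $u_p=0$ equals $\tfrac{n-i}{n}A_i^C$, and the number of codewords of weight $i$ with $u_p\neq 0$ equals $\tfrac{i}{n}A_i^C$. This is exactly the statement that $C$ is generalized $1$-homogeneous at every weight: each of the $n$ coordinates lies in the support of the same number $\lambda_i$ of weight-$i$ codewords, and double counting incidences gives $n\lambda_i = iA_i^C$, i.e.\ $\lambda_i = iA_i^C/n$, whence the complementary count is $(n-i)A_i^C/n$. Conversely, if these two coefficient identities hold for the reference vector supported at an arbitrary coordinate $p$, then every coordinate is in the support of exactly $iA_i^C/n$ weight-$i$ codewords, which is the $1$-$(\bm v,\bm k,\lambda)$ design condition with $\bm v=(1,n-1)$, $\bm k=(1,i-1)$ (the generalized $1$-design attached to the single reference vector), so $C$ is generalized $1$-homogeneous.

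I expect the only real subtlety to be bookkeeping: matching the generalized-$t$-design vocabulary of Theorem~\ref{Thm:JacToDesign} (with $\ell$ reference vectors, $\bm t=(0,\ldots,0,1,0,\ldots,0)$) to the plain ``$1$-homogeneous'' condition, and checking that the $\ell-1$ zero entries of the tuple contribute trivially — formally, a reference vector $\bm w_i=\bm 0$ forces $\phi(w_{ik})=0$ for all $k$, so the corresponding coordinate of each index vector $\bm a$ is identically $0$ and those variables can be suppressed, leaving genuinely a function of $w_j,z_j,x_0,x_1$ only. Once that reduction is in place, the equivalence is just the incidence double-count above together with the derivative computation, so I would present the derivative identity first, then read off the two coefficientwise conditions, and finally invoke Theorem~\ref{Thm:JacToDesign} (or re-derive the one-line double count) to conclude in both directions. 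No step requires more than elementary manipulation; the ``hard part'' is purely notational hygiene around the multi-vector indexing.
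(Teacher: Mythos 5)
Your proposal is correct and takes essentially the same route as the paper: your derivative computation combined with the incidence double count is precisely the content of Lemma~\ref{Lem:MacSloane}, which the paper cites for the forward direction, and your converse (the right-hand side is independent of the reference vector, so the Jacobi polynomial is invariant and Theorem~\ref{Thm:JacToDesign} gives generalized $1$-homogeneity) is the paper's argument verbatim. The only difference is that you inline the puncturing lemma and the coefficient comparison explicitly rather than citing it, which makes the forward direction more self-contained but not a different proof.
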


\begin{proof}
	Let $C$ be generalized $1$-homogeneous. 
	Then by Lemma~\ref{Lem:MacSloane} one can easily find 
	Equation~(\ref{Equ:OneHomo}) is true.
	Conversely, the hypothesis implies that the
	Jacobi polynomial 
	$J_{C,0,\ldots,0,1,0,\ldots,0}$
	is uniquely determined.
	Therefore, by Theorem~\ref{Thm:JacToDesign} we can say that the codewords of every given weight of $C$ form a generalized $1$-design.  
	Hence~$C$ is generalized $1$-homogeneous.
\end{proof}

\begin{thm}\label{Thm:Main2}
	If $C$ is generalized $t$-homogeneous and contains no codeword of weight $\leq t$ then for $\bm{t} = (t_{1},\ldots,t_{\ell})$ such that $t_{1} + \cdots + t_{\ell} = t$ we have
	\[
		J_{C,t_{1},\ldots,t_{\ell}}
		=
		\frac{1}{n(n-1)\cdots(n-t+1)}
		A_{\ell}^{t_{\ell}} \cdots A_{1}^{t_{1}} \cdot W_C.
	\]
\end{thm}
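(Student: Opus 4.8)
The plan is to prove the statement by induction on $t$, peeling off one polarization operator at a time while tracking how the Jacobi polynomial with reference vectors behaves under puncturing. The base case $t=1$ is exactly Theorem~\ref{Thm:Main1}: if $C$ is generalized $1$-homogeneous (which is implied by generalized $t$-homogeneity for $t\ge 1$), then $J_{C,0,\ldots,0,1,0,\ldots,0} = \frac{1}{n} A_j\cdot W_C$ for each coordinate slot $j$. The hypothesis that $C$ has no codeword of weight $\le t$ is what guarantees, at each stage of the induction, that the reference vectors we need can actually be chosen disjointly inside the supports of codewords, so that the design property from Theorem~\ref{Thm:JacToDesign} genuinely applies and $J_{C,t_1,\ldots,t_\ell}$ is well-defined (independent of the choice of reference vectors).

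The key computational step is the analogue of Lemma~\ref{Lem:MacSloane} for Jacobi polynomials: I would show that if $\bm{w}_1,\ldots,\bm{w}_\ell$ are reference vectors with $\wt_{X_i}(\bm{w}_i)=t_i$, then adding one more nonzero entry to $\bm{w}_j$ (increasing $t_j$ by one) corresponds, up to the factor $\frac{1}{n-t+1}$, to applying the operator $A_j$ to the Jacobi polynomial already built. Concretely, fix a coordinate place and consider the effect of the derivative identity $W_{C-i}(x,y)=\frac1n\left(\frac{\partial}{\partial x}+\frac{\partial}{\partial y}\right)W_C(x,y)$ applied not to the plain weight enumerator but to the Jacobi polynomial: the operator $A_j$ as defined in~\eqref{Equ:Operator} replaces the pair $(x_0,x_1)$ by $(w_j,z_j)$ via differentiation, which is precisely the bookkeeping needed to ``promote'' one free coordinate into a coordinate lying in the support of $\bm{w}_j$. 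Iterating, $A_\ell^{t_\ell}\cdots A_1^{t_1}$ builds up the full tuple $\bm{t}=(t_1,\ldots,t_\ell)$, and the product of normalizing constants telescopes to $n(n-1)\cdots(n-t+1)$.

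I would organize the induction as follows. Assume the formula holds for all tuples of total weight $t-1$ under the stated hypotheses. Given $\bm{t}=(t_1,\ldots,t_\ell)$ with $\sum t_i=t$, pick an index $j$ with $t_j\ge 1$ and set $\bm{t}'=\bm{t}-\mathbf{e}_j$. By the inductive hypothesis $J_{C,t_1,\ldots,t_j-1,\ldots,t_\ell}=\frac{1}{n(n-1)\cdots(n-t+2)}A_\ell^{t_\ell}\cdots A_j^{t_j-1}\cdots A_1^{t_1}\cdot W_C$. Then I apply $\frac{1}{n-t+1}A_j$ to both sides and must check two things: that the left side becomes $J_{C,t_1,\ldots,t_\ell}$, and that the operators $A_i$ commute sufficiently (or that the order in the stated product is the one produced by the induction). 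The left-hand identity is where the generalized $t$-homogeneity and the ``no codeword of weight $\le t$'' hypothesis are used together with Theorem~\ref{Thm:JacToDesign}: homogeneity at level $t$ forces all intermediate Jacobi polynomials at levels $\le t$ to be reference-vector-independent, so the puncturing/derivative argument that underlies Lemma~\ref{Lem:MacSloane} can be transplanted to count how a weight-$i$ codeword distributes among the enlarged reference support.

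The main obstacle I anticipate is precisely this transplanting step: verifying that differentiating the Jacobi polynomial and relabeling variables via $A_j$ faithfully reproduces the combinatorial count ``number of codewords of weight $k$ whose support meets a prescribed $(t_1,\ldots,t_j,\ldots,t_\ell)$-configuration.'' One has to match the monomial exponents $N_{\bm{a}}(\bm{u},\bm{w}_1,\ldots,\bm{w}_\ell)$ on both sides, and in particular confirm that the operator $A_j$ only ever acts on the variables $x_{(0,\ldots,0)},x_{(1,0,\ldots,0)},w_j,z_j$ as claimed and leaves the other $x_{\bm a}$ untouched, which is what makes the $A_i$'s behave independently for distinct $i$. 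A secondary, more routine obstacle is checking that the hypothesis ``no codeword of weight $\le t$'' is exactly strong enough: it ensures $k>t$ for every weight $k$ occurring in $C$ (other than the zero codeword, which contributes only to the pure $x_0$-power and is annihilated appropriately by the derivatives), so that the factor $n-t+1$ in the denominator never vanishes and no degenerate term spoils the telescoping. Once these are in place, the computation is bookkeeping and the result follows.
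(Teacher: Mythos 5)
Your proposal follows essentially the same route as the paper: induction on the total reference weight with Theorem~\ref{Thm:Main1} as the base case, and the inductive step consisting of applying $\frac{1}{n-d}A_j$ to promote one reference coordinate, i.e.\ the identity $J_{C,d_1,\ldots,d_j+1,\ldots,d_\ell}=\frac{1}{n-d}A_j\cdot J_{C,d_1,\ldots,d_j,\ldots,d_\ell}$. The ``transplanting step'' you flag as the main obstacle is exactly the identity the paper itself asserts without further verification, so your plan is correct and aligned with the published argument.
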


\begin{proof}
	The statement is true for $t = 1$ by Theorem~\ref{Thm:Main1}. 
	For $\mathbf{d} := (d_1,\ldots,d_{\ell})$ such that 
	$d_1 + \cdots + d_\ell = d < t$ satisfying $0 \leq d_{i} \leq t_{i}$
	for all $i$, let us suppose that
	\[
		J_{C,d_{1},\ldots,d_{\ell}}
		=
		\frac{1}{n(n-1)\cdots(n-d+1)}
		A_{\ell}^{d_{\ell}} \cdots A_{1}^{d_{1}} \cdot W_C.
	\]
	Let $d_{j} < t_{j}$. Then we have 
	
	\begin{align*}
		J_{C,d_{1},\ldots,d_{j-1},(d_{j}+1),d_{j+1},\ldots,d_{\ell}}
		& =
		\frac{1}{n-d}
		A_{j} \cdot J_{C,d_{1},\ldots,d_{j-1},d_{j},d_{j+1},\ldots,d_{\ell}}\\
		& =
		\frac{1}{n-d}
		A_{j}
		\frac{1}{n(n-1)\cdots(n-d+1)}\\
		& \quad \quad \quad
		A_{\ell}^{d_{\ell}} \cdots A_{j+1}^{d_{j+1}}A_{j}^{d_{j}}A_{j-1}^{d_{j-1}}\cdots A_{1}^{d_{1}} 
		\cdot W_C\\
		& =
		\frac{1}{n(n-1)\cdots(n-d+1)(n-d)}\\
		& \quad \quad \quad
		A_{\ell}^{d_{\ell}} \cdots A_{j+1}^{d_{j+1}}A_{j}^{d_{j}+1}A_{j-1}^{d_{j-1}}\cdots A_{1}^{d_{1}} 
		\cdot W_C.
	\end{align*}
	The converse implication follows from the proof of Theorem~\ref{Thm:Main1}.
\end{proof}

\section{Designs and Molien series}\label{Sec:Examples}

Bonnecaze et al.~\cite{BMS1999} studied certain length of Type~II codes to
focus some relation between Jacobi polynomials and designs. 
In this section, we follow the idea, and establish the connection between Jacobi
polynomials and designs for some Type~III and Type~IV codes. We would like to mention
that in this section, we study Jacobi polynomials with one reference vector. 
%
%
To overcome all sorts of confusions, we refer the readers to~\cite{BMS1999}
for notations and symbols.

First, let us recall~\cite{BMS1999} for the definitions of various types of designs. 
A \emph{design} with parameters $t$-$(v,k,(\lambda_1^{a_1},\dots,\lambda_N^{a_N}))$ is a collection of $k$-element subsets called \emph{blocks} of a $v$-element set (the \emph{varieties}) and a partition of the set of all $t$-tuples into $N$ \emph{groups} such that every $t$-set belonging to the $i^{\text{th}}$ group (comprising $a_i$ such $t$-sets) is contained in exactly $\lambda_i$ blocks. 
Notice that for $N = 1$ the design coincide with a $t$-design.
A \emph{packing} (resp. \emph{covering}) \emph{design} with parameters $t$-$(v,k,\lambda)$ is a design with $\max_i(\lambda_i)=\lambda$
(resp. $\min_i(\lambda_i)=\lambda$).
The maximum (resp. minimum) number of blocks of a packing (resp. covering) design denoted by $D_{\lambda}(v,k,t)$ (resp. $C_{\lambda}(v,k,t)$).

The study of weight polynomials of a code with the help of invariant theory
is a very convenient and powerful technique, as shown in~\cite{NRS, Slone1977}.          
Let $G$ be a finite subgroup of $GL(2,\CC)$, and 
$G$ acts on a polynomial ring of two variables, say $x,y$.
Then it is well-known from~\cite[Theorem 1]{Slone1977} that 
the classical Molien series 
gives the linearly independent homogeneous invariant polynomials of $G$.
Later, R.P.~Stanley~\cite{Stanley1979} introduce the 
notion of \emph{bivariate} \emph{Molien series} that computes invariant polynomials 
of $G$ by their homogeneous degrees in $w,z$ and $x,y$.
R.P.~Stanley~\cite{Stanley1979} defined
the bivariate Molien series as follows:
\[
	f(u,v)
	:=
	\frac{1}{|G|}
	\sum_{g\in G}
	\frac{1}{\det{(1-ug)}\det{(1-vg)}}.
\]
A.~Bonnecaze et al.~\cite{BMS1999} showed that
the bivariate Molien series plays an important role to describe the relation
between the Jacobi polynomials of codes and designs such as group divisible designs,
packing (resp. covering) designs.
To compute all the invariant polynomials of~$G$ explicitly, it is convenient
to classify the invariants by their degrees.
We denote the homogeneous part of degree $d$ of $f(u,v)$ by $f[d]$.

In the following examples, we study two types of codes over $\FF_{q}$; 
Type~III and Type~IV, 
that hold $t$-designs with parameters $t$-$(v,k,\lambda)$, 
and we would like to give an upper (resp. lower) bound of $D_{\lambda}(v,k,t)$ 
(resp. $C_{\lambda}(v,k,t)$) of a packing (resp. covering) design 
corresponding to the parameters.
To do so, firstly, we compute the homogeneous part $f[d]$ of $f(u,v)$ 
corresponding to a code of length~$d$.
The coefficients of $f[d]$ determine the number of polynomials that are needed to generate the space of Jacobi polynomials corresponding to the reference sets with a particular cardinality.
The number of those Jacobi polynomials determines the number of $\lambda$'s of $t$-$(n,k,\lambda_1^{a_1},\dots,\lambda_N^{a_N})$.
Finally, the coefficient of the term $x^{n-k}y^k$ in the weight enumerator of the code 
obtains the upper (resp. lower) bound of $D_{\lambda}(v,k,t)$ (resp. $C_{\lambda}(v,k,t)$).
Note that a packing (resp. covering) design is a simple design.


\subsection{Type III codes}

The MacWilliams identity and the modulo~$3$ congruence condition yield that 
the weight enumerator of a Type III code remains invariant under the action of 
group $G_3$ of order $48$ which is generated by the following two matrices:
\[
	\frac{1}{\sqrt{3}}
	\begin{bmatrix}
		1&2\\
		1&-1
	\end{bmatrix}
	\text{ and }
	\begin{bmatrix}
		1&0\\
		0&e^{2\pi i/3}
	\end{bmatrix}.
\]
For the case of the group $G_3$, we get from the Magma computations 
that the denominator of $f(u,v)$ is in the form of $d(u)d(v)$, where
\[
d(u)
=
(u-1)^2(u+1)^2(u^2+1)^2(u^2-u+1)(u^2+u+1)(u^4-u^2+1).
\]

\begin{ex}[length $4$]
	Let $C_4^{\text{III}}$ be a ternary self-dual code of length $4$ in \cite{HM}. 
	Then
	\[
	f[4]=u^4+u^3v+u^2v^2+uv^3+v^4.
	\]
	Since $C_4^{\text{III}}$ holds $3$-design, we assume that
	$|T|=1,2,3$. Then
	\begin{align*}
		J_{C_4^{\text{III}},1}&=\frac{1}{4}AW_{C_4^{\text{III}}}(x,y)\\
		&=w(x^3+2y^3)+6zxy^2,\\
		J_{C_4^{\text{III}},2}&=\frac{1}{4\cdot3}A^2W_{C_4^{\text{III}}}(x,y)\\
		&=w^2x^2+4wzy^2+4z^2xy,\\
		J_{C_4^{\text{III}},3}&=\frac{1}{4\cdot3\cdot2}A^3W_{C_4^{\text{III}}}(x,y)\\
		&=w^3x+6wz^2y+2z^3x.
	\end{align*}

By dividing the coefficient of the term $z^ty^{3-t}$ ($t=1,2,3$) in the Jacobi polynomial by $2$, we obtain the values of $\lambda$.
Since the coefficient of the term $u^{4-t}v^t$ in $f[d]$  is $1$, 
we obtain the group divisible design $t$-$(4,3,\lambda)$. 
Then the maximum number of blocks of $t$-$(4,3,\lambda)$ design is 4, and the minimum number of blocks of $t$-$(4,3,\lambda)$ design is 4.
Therefore, $D_{\lambda}(4,3,t)\leq 4\leq C_{\lambda}(4,3,t)$.
\end{ex}

\begin{ex}[length $8$]
	Let $C_8^{\text{III}}$ be a ternary self-dual code of length $8$ in \cite{HM}. 
	Then
	\[
	f[8]=u^8+u^7v+2u^6v^2+2u^5v^3+2u^4v^4+2u^3v^5+2u^2v^6+uv^7+v^8.
	\]
	Since $C_8^{\text{III}}$ holds $1$-design, we assume that
	$|T|=1$. Then
	\[
	J_{C_8^{\text{III}},1}=\frac{1}{8}AW_{C_8^{\text{III}}}(x,y)=w(x^7+10x^4y^3+16xy^6)+z(6x^5y^2+48x^2y^5).
	\]
	The space of Jacobi polynomials $J_{C_8^{\text{III}},T}$ with $|T|=2$ may be generated by the two polynomials
	\begin{align*}
		J^1_{C_8^{\text{III}},2}&=w^2(x^6+8x^3y^3)+wz(4x^4y^2+32xy^5)+z^2(4x^5y+32x^2y^4),\\ 
		J^2_{C_8^{\text{III}},2}&=w^2(4x^3y^3+4y^6)+wz(12x^4y^2+24xy^5)+36z^2x^2y^4. 
	\end{align*}
	Combining these two equations, we obtain $2$-designs with parameters
	\begin{align*}
		2&\text{-}(8,3,(2^{12},0^{16})),\\
		2&\text{-}(8,6,(8^{12},9^{16})).
	\end{align*}
		Since $k=3\ell$ ($1\leq \ell\leq 2$), dividing the coefficient of 
		the term $z^ty^{k-t}$ ($t=2,3$) in the Jacobi polynomials by $2^\ell$, 
		we obtain the values of $\lambda_1,\lambda_2$.
	Dividing the coefficient of the term $x^{8-k}y^k$ in the weight enumerator of the code by $2^\ell$, we obtain an upper (resp. lower) bound of $D_{\lambda}(8,k,t)$ (resp. $C_{\lambda}(8,k,t)$).
	\begin{align*}
		D_2(8,3,2)&\leq 8\leq C_0(8,3,2),\\
		D_9(8,6,2)&\leq 16\leq C_8(8,6,2).
	\end{align*}
	The space of Jacobi polynomials $J_{C_8^{\text{III}},T}$ with $|T|=3$ may be generated by the two polynomials
	\begin{align*}
		J^1_{C_8^{\text{III}},3}&=w^3(x^5+8x^2y^3)+wz^2(6x^4y+48xy^4)+z^3(2x^5+16x^2y^3),\\
		J^2_{C_8^{\text{III}},3}&=w^3(x^5+2x^2y^3)+w^2z(10x^3y^2+8y^5)\\
		&~+wz^2(4x^4y+32xy^4)+24z^3x^2y^3,
	\end{align*}
	which gives packing and covering designs
	\begin{align*}
		D_1(8,3,3)&\leq 8\leq C_0(8,3,3),\\
		D_6(8,6,3)&\leq 16\leq C_4(8,6,3).
	\end{align*}
\end{ex}

\begin{ex}[length $12$]
	Let $C_{12}^{\text{III}}$ be the first ternary self-dual code of length $12$ in \cite{HM}. 
	\[
	f[12]=2u^{12}+2u^{11}v+3u^{10}v^2+4u^9v^3+4u^8v^4+4u^7v^5+5u^6v^6+\cdots.
	\]
	Since $C_{12}^{\text{III}}$ holds $5$-design, we observe that
	\begin{align*}
		J_{C_{12}^{\text{III}},1}&=\frac{1}{12}AW_{C_{12}^{\text{III}}}(x,y)\\
		&=w(x^{11}+132x^5y^6+110x^2y^9)+z(132x^6y^5+330x^3y^8+24y^{11}),\\
		J_{C_{12}^{\text{III}},2}&=\frac{1}{12\cdot11}A^2W_{C_{12}^{\text{III}}}(x,y)\\
		&=w^2(x^{10}+60x^4y^6+20xy^9)+2wz(72x^5y^5+90x^2y^8)\\
		&~+z^2(60x^6y^4+240x^3y^7+24y^{10}),\\
		J_{C_{12}^{\text{III}},3}&=\frac{1}{12\cdot11\cdot10}A^3W_{C_{12}^{\text{III}}}(x,y)\\
		&=w^3(x^9+24x^3y^6+2y^9)+w^2z(108x^4y^5+54xy^8)\\
		&~+wz^2(108x^5y^4+216x^2y^7)\\
		&+z^3(24x^6y^3+168x^3y^6+24y^9),\\
		J_{C_{12}^{\text{III}},4}&=\frac{1}{12\cdot11\cdot10\cdot9}A^4W_{C_{12}^{\text{III}}}(x,y)\\
		&=w^4(x^8+8x^2y^6)+w^3z(64x^3y^5+8y^8)+w^2z^2(120x^4y^4+96xy^7)\\
		&+wz^3(64x^5y^3+224x^2y^6)+z^4(8x^6y^2+112x^3y^5+24y^8),\\
		J_{C_{12}^{\text{III}},5}&=\frac{1}{12\cdot11\cdot10\cdot9\cdot8}A^5w_{C_{12}^{\text{III}}}(x,y)\\
		&=w^5(x^7+2xy^6)+30w^4zx^2y^5+w^3z^2(100x^3y^4+20y^7)\\
		&+wz^4(30x^5y^2+210x^2y^5)+w^2z^3(100x^4y^3+140xy^6)\\
		&~+z^5(2x^6y+70x^3y^4+24y^7).
	\end{align*}
	The space of Jacobi polynomials $J_{C_{12}^{\text{III}},T}$ with $|T|=6$ is generated by the two polynomials
	\begin{align*}
		J^1_{C_{12}^{\text{III}},6}&=w^6(x^6+2y^6)+90w^4z^2x^2y^4+w^3z^3(80x^3y^3+40y^6)\\
		&~+w^2z^4(90x^4y^2+180xy^5)+180wz^5x^2y^4\\
		&~+z^6(2x^6+40x^3y^3+24y^6),\\
		J^2_{C_{12}^{\text{III}},6}&=w^6x^6+12w^5zxy^5+60w^4z^2x^2y^4+w^3z^3(120x^3y^3+40y^6)\\
		&~+w^2z^4(60x^4y^2+180xy^5)+wz^5(12x^5y+180x^2y^4)\\
		&~+z^6(40x^3y^3+24y^6),
	\end{align*}
	which gives packing and covering designs
	\[
	D_1(12,6,6)\leq 132\leq C_0(12,6,6).
	\]
\end{ex}

\begin{ex}[length $16$]
	Let $C_{16}^{\text{III}}$ be the seventh ternary self-dual code of length $16$ in \cite{HM}.  
	\begin{align*}
	f[16]&=2u^{16}+3u^{15}v+4u^{14}v^2+5u^{13}v^3+6u^{12}v^4+6u^{11}v^5+7u^{10}v^6\\
	&~+7u^9v^7+7u^8v^8+\cdots.
	\end{align*}
	Observe that
	\begin{align*}
		&J_{C_{16}^{\text{III}},1}=\frac{1}{16}AW_{C_{16}^{\text{III}}}(x,y)\\
		&=w(x^{15}+140x^9y^6+1190x^6y^9+840x^3y^{12}+16y^{15})\\
		&~+z(84x^{10}y^5+1530x^7y^8+2520x^4y^{11}+240xy^{14}),\\
		&J_{C_{16}^{\text{III}},2}=\frac{1}{16\cdot15}A^2W_{C_{16}^{\text{III}}}(x,y)\\
		&=w^2(x^{14}+84x^8y^6+476x^5y^9+168x^2y^{12})\\
		&~+wz(112x^9y^5+1428x^6y^8+1344x^3y^{11}+32y^{14})\\
		&~+z^2(28x^{10}y^4+816x^7y^7+1848x^4y^{10}+224xy^{13}),\\
		&J_{C_{16}^{\text{III}},3}=\frac{1}{16\cdot15\cdot14}A^3W_{C_{16}^{\text{III}}}(x,y)\\
		&=w^3(x^{13}+48x^7y^6+170x^4y^9+24xy^{12})\\
		&~+w^2z(108x^8y^5+918x^5y^8+432x^2y^{11})\\
		&~+wz^2(60x^9y^4+1224x^6y^7+1584x^3y^{10}+48y^{13})\\
		&~+z^3(8x^{10}y^3+408x^7y^6+1320x^4y^9+208xy^{12}).
	\end{align*}
	The space of Jacobi polynomials $J_{C_{16}^{\text{III}},T}$ with $|T|=4$ may be generated by the four polynomials
	\begin{align*}
		&J^1_{C_{16}^{\text{III}},4}=w^4(x^{12}+32x^6y^6+40x^3y^9+8y^{12})\\
		&~+w^3z(64x^7y^5+520x^4y^8+64xy^{11})\\
		&~+w^2z^2(120x^8y^4+1056x^5y^7+768x^2y^{10})\\
		&~+wz^3(928x^6y^6+1600x^3y^9+64y^{12})\\
		&~+z^4(8x^{10}y^2+176x^7y^5+920x^4y^8+192xy^{11}), \\ 
		&J^2_{C_{16}^{\text{III}},4}=w^4(x^{12}+24x^6y^6+56x^3y^9)\\
		&~+w^3z(96x^7y^5+456x^4y^8+96xy^{11})\\
		&~+w^2z^2(72x^8y^4+1152x^5y^7+720x^2y^{10})\\
		&~+wz^3(32x^9y^3+864x^6y^6+1632x^3y^9+64y^{12})\\
		&~+z^4(192x^7y^5+912x^4y^8+192xy^{11}), \\ 
		&J^3_{C_{16}^{\text{III}},4}=w^4(x^{12}+26x^6y^6+52x^3y^9+2y^{12})\\
		&~+w^3z(88x^7y^5+472x^4y^8+88xy^{11})\\
		&~+w^2z^2(84x^8y^4+1128x^5y^7+732x^2y^{10})\\
		&~+wz^3(24x^9y^3+880x^6y^6+1624x^3y^9+64y^{12})\\
		&~+z^4(2x^{10}y^2+188x^7y^5+914x^4y^8+192xy^{11}), \\
		&J^4_{C_{16}^{\text{III}},4}=w^4(x^{12}+28x^6y^6+48x^3y^9+4y^{12})\\
		&~+w^3z(80x^7y^5+488x^4y^8+80xy^{11})\\
		&~+w^2z^2(72x^8y^4+1104x^5y^7+744x^2y^{10})\\
		&~+wz^3(16x^9y^3+896x^6y^6+1616x^3y^9+64y^{12})\\
		&~+z^4(4x^{10}y^2+184x^7y^5+916x^4y^8+192xy^{11}), 
	\end{align*}
	which gives packing and covering designs
	\begin{align*}
		D_4(16,6,4)&\leq 112\leq C_0(16,6,4),\\
		D_{96}(16,9,4)&\leq 1360\leq C_{88}(16,9,4),\\
		D_{343}(16,12,4)&\leq 1260\leq C_{342}(16,12,4).
	\end{align*}

\end{ex}

\begin{ex}[length $20$]
	Let $C_{20}^{\text{III}}$ be the $19$th ternary self-dual code of length $20$ in \cite{HM}. 
	\begin{align*}
		f[20]&=2u^{20}+3u^{19}v+5u^{18}v^2+6u^{17}v^3+7u^{16}v^4+8u^{15}v^5+9u^{14}v^6\\
		&~+9u^{13}v^7+10u^{12}v^8+10u^{11}v^9+10u^{10}v^{10}+\cdots.
	\end{align*}
	Observe that
	\begin{align*}
		&J_{C_{20}^{\text{III}},1}=\frac{1}{20}AW_{C_{20}^{\text{III}}}(x,y)\\
		&=w(x^{19}+84x^{13}y^6+2398x^{10}y^9+10512x^7y^{12}+6432x^4y^{15}+256xy^{18})\\
		&~+z(36x^{14}y^5+1962x^{11}y^8+15768x^8y^{11}+19296x^5y^{14}+2304x^2y^{17}).
	\end{align*}
	The space of Jacobi polynomials $J_{C_{20}^{\text{III}},T}$ with $|T|=2$ may be generated by the two polynomials
	\begin{align*}
		&J^1_{C_{20}^{\text{III}},2}=w^2(x^{18}+48x^{12}y^6+1300x^9y^9+3816x^6y^{12}+1392x^3y^{15}+4y^{18})\\
		&~+wz(72x^{13}y^5+2196x^{10}y^8+13392x^7y^{11}+10080x^4y^{14}+504xy^{17})\\
		&~+z^2(864x^{11}y^7+9072x^8y^{10}+14256x^5y^{13}+2052x^2y^{16}),\\ 
		&J^2_{C_{20}^{\text{III}},2}=w^2(x^{18}+68x^{12}y^6+1220x^9y^9+3936x^6y^{12}+1312x^3y^{15}+24y^{18})\\
		&~+wz(32x^{13}y^5+2356x^{10}y^8+13152x^7y^{11}+10240x^4y^{14}+464xy^{17})\\
		&~+z^2(20x^{14}y^4+784x^{11}y^7+9192x^8y^{10}+14176x^5y^{13}+2072x^2y^{16}), 
	\end{align*}
	which gives packing and covering designs
	\begin{align*}
		D_{10}(20,6,2)&\leq 60\leq C_0(20,6,2),\\
		D_{432}(20,9,2)&\leq 2180\leq C_{392}(20,9,2),\\
		D_{4296}(20,12,2)&\leq 12240\leq C_{4212}(20,12,2),\\
		D_{6444}(20,15,2)&\leq 11544\leq C_{6308}(20,15,2).
	\end{align*}
\end{ex}

\subsection{Type IV codes}

It is well-known (see~\cite{NRS}) that 
the weight enumerator of a Type IV code remains invariant under 
the action of group $G_4$ of order $12$ 
which is generated by the following two matrices:
\[
	\frac{1}{2}
	\begin{bmatrix}
		1&3\\
		1&-1
	\end{bmatrix}
	\text{ and }
	\begin{bmatrix}
		1&0\\
		0&-1
	\end{bmatrix},
\]
which corresponds to the MacWilliams identity and the modulo~$2$ congruence condition,
respectively.
In particular, for the case of the group~$G_4$ a Magma computation gives the 
denominator $d(u)d(v)$ of $f(u,v)$, where
\[
	d(u)
	=
	(1-u+u^2) (1+u+u^2) (1+2u^6+3u^{12}+4u^{18}+5u^{24}+6u^{30}+7u^{36}).
\]


\begin{ex}[length $2$]
	Let $C_2^{\text{IV}}$ be a Hermitian self-dual code over $\FF_4$ of length $2$ in \cite{HM}. 
	Then
	\[
	f[2]=u^2+uv+v^2
	\]
	If $|T|=1$, we have
	\begin{align*}
		J_{C_2^{\text{IV}},1}&=\frac{1}{2}AW_{C_2^{\text{IV}}}(x,y)\\
		&=wx+3zy.
	\end{align*}
\end{ex}

\begin{ex}[length $4$]
	Let $C_4^{\text{IV}}$ be a Hermitian self-dual code over $\FF_4$ of length $4$ in \cite{HM}. 
	\[
	f[4]=u^4+u^3v+2u^2v^2+uv^3+v^4.
	\]
	Observe that
	\[
	J_{C_4^{\text{IV}},1}=\frac{1}{4}AW_{C_4^{\text{IV}}}(x,y)=wx^3+3wxy^2+3zx^2y+9zy^3.
	\]
	The space of Jacobi polynomials $J_{C_4^{\text{IV}},T}$ with $|T|=2$ is generated by the two polynomials
	\begin{align*}
		J^1_{C_4^{\text{IV}},2}&=w^2x^2+6wzxy+9z^2y^2,\\
		J^2_{C_4^{\text{IV}},2}&=w^2x^2+3w^2y^2+3z^2x^2+9z^2y^2.
	\end{align*}
	Combining these two equations we obtain $2$-designs with parameters
	\[
	2\text{-}(4,2,0^{4},1^{2}).
	\]
Since $k=2$, dividing the coefficient of the term $z^4y^2$ in the Jacobi polynomials by $2$, we obtain the values of $\lambda_1,\lambda_2$.
	This gives packing and covering designs 
	\[
	D_1(4,2,2)\leq 2\leq C_0(4,2,2).
	\]
\end{ex}

\begin{ex}[length $6$]
	Let $C_6^{\text{IV}}$ be the first Hermitian self-dual code over $\FF_4$ of length $6$ in \cite{HM}. 
	\[
	f[6]=2u^6+2u^5v+3u^4v^2+3u^3v^3+3u^2v^4+2uv^5+2v^6.
	\]
	Observe that
	\[
		J_{C_6^{\text{IV}},1}
		=
		\frac{1}{6}
		AW_{C_6^{\text{IV}}}(x,y)
		=
		wx^5+6wx^3y^2+9wxy^4+3zx^4y+18zx^2y^3+27y^5.
	\]
	The space of Jacobi polynomials $J_{C_6^{\text{IV}},T}$ with $|T|=2$ may be generated by the two polynomials
	\begin{align*}
		J^1_{C_6^{\text{IV}},2}&=w^2(x^4+6x^2y^2+9y^4)+z^2(3x^4+18x^2y^2+27y^4),\\
		J^2_{C_6^{\text{IV}},2}&=w^2(x^4+3x^2y^2)+wz(6x^3y+18xy^3)+z^2(9x^2y^2+27y^4).
	\end{align*}
	Combining these two equations we obtain $2$-designs with parameters
	\begin{align*}
		2&\text{-}(6,2,0^{12},1^{3}),\\
		2&\text{-}(6,4,1^{12},2^{3}).
	\end{align*}
	Since $k=2\ell$ ($1\leq \ell\leq 2$), by dividing the coefficient of the term $z^2y^{k-2}$ in the Jacobi polynomials by $3^\ell$, we obtain the values of $\lambda_1,\lambda_2$.
	This gives packing and covering designs 
	\begin{align*}
		D_1(6,2,2)&\leq 3\leq C_0(6,2,2),\\
		D_2(6,4,2)&\leq 3\leq C_1(6,4,2).
	\end{align*}
\end{ex}

\begin{ex}[length $8$]
	Let $C_8^{\text{IV}}$ be the third Hermitian self-dual code over $\FF_4$ of length $8$ in \cite{HM}. 
	\[
	f[8]=2u^8+3u^7v+4u^6v^2+4u^5v^3+5u^4v^4+4u^3v^5+4u^2v^6+3uv^7+2v^8.
	\]
	Observe that
	\begin{align*}
		J_{C_8^{\text{IV}},1}&=\frac{1}{8}AW_{C_8^{\text{IV}}}(x,y)\\
		&=w(x^7+21x^3y^4+42xy^6)+z(21x^4y^3+126x^2y^5+45y^7),\\
		J_{C_8^{\text{IV}},2}&=\frac{1}{8\cdot7}A^2W_{C_8^{\text{IV}}}(x,y)\\
		&=w^2(x^6+9x^2y^4+6y^6)+wz(24x^3y^3+72xy^5)\\
		&~+z^2(9x^4y^2+90x^2y^4+45y^6),\\
		J_{C_8^{\text{IV}},3}&=\frac{1}{8\cdot7\cdot6}A^3W_{C_8^{\text{IV}}}(x,y)\\
		&=w^3(x^5+3xy^4)+w^2z(18x^2y^3+18y^5)\\
		&~+wz^2(18x^3y^2+90xy^4)+z^3(3x^4y+60x^2y^3+45y^5).\\
	\end{align*}
	The space of Jacobi polynomials $J_{C_8^{\text{IV}},T}$ with $|T|=4$ may be generated by the two polynomials
	\begin{align*}
		J^1_{C_8^{\text{IV}},4}&=w^4(x^4+3y^4)+36w^2z^2(x^2y^2+36y^4)\\
		&~+96wz^3xy^3+z^4(3x^4+36x^2y^2+45y^4),\\
		J^2_{C_8^{\text{IV}},4}&=w^4x^4+12w^3zxy^3+w^2z^2(18x^2y^2+36y^4)\\
		&~+wz^3(12x^3y+96xy^3)+z^4(36x^2y^2+45y^4).
	\end{align*}
	Combining these two equations we obtain $4$-designs with parameters
	\[
	4\text{-}(8,4,0^{56},1^{14}).
	\]
	This gives packing and covering designs
	\[
	D_1(8,4,4)\leq 14\leq C_0(8,4,4).
	\]
\end{ex}


\begin{ex}[length $12$]
	Let $C_{12}^{\text{IV}}$ be the seventh Hermitian self-dual code over $\FF_4$ of length $12$ in \cite{HM}. 
	\begin{align*}
	f[12]&=3u^{12}+4u^11v+6u^{10}v^2+7u^9v^3+8u^8v^4+8u^7v^5+9u^6v^6+8u^5v^7\\
	&~+8u^4v^8+7u^3v^9+6u^2v^{10}+4uv^{11}+3v^{12}.
	\end{align*}
	Observe that
	\begin{align*}
		J_{C_{12}^{\text{IV}},1}&=\frac{1}{12}AW_{C_{12}^{\text{IV}}}(x,y)\\
		&=w(x^{11}+30x^7y^4+108x^5y^6+585x^3y^8+300xy^{10})\\
		&~+z(15x^8y^3+108x^6y^5+1170x^4y^7+279y^{11}).
	\end{align*}
	The space of Jacobi polynomials $J_{C_{12}^{\text{IV}},T}$ with $|T|=2$ may be generated by the two polynomials
	\begin{align*}
		J^1_{C_{12}^{\text{IV}},2}&=w^2(x^{10}+30x^6y^4+60x^4y^6+105x^2y^8+60y^{10})\\
		&~+wz(96x^5y^5+960x^3y^7+480xy^9)\\
		&~+z^2(15x^8y^2+60x^6y^4+690x^4y^6+1260x^2y^8+279y^{10}),\\ 
		J^2_{C_{12}^{\text{IV}},2}&=w^2(x^{10}+18x^6y^4+48x^4y^6+165x^2y^8+24y^{10})\\
		&~+wz(24x^7y^3+120x^5y^5+840x^3y^7+552xy^9)\\
		&~+z^2(3x^8y^2+48x^6y^4+750x^4y^6+1224x^2y^8+279y^{10}), 
	\end{align*}
	which gives packing and covering designs
	\begin{align*}
		D_5(12,4,2)&\leq 15 \leq C_1(12,4,2),\\
		D_{12}(12,6,2)&\leq 52 \leq C_{10}(12,6,2),\\
		D_{110}(12,8,2)&\leq 255 \leq C_{90}(12,8,2).
	\end{align*}
\end{ex}

\begin{ex}[length $14$]
	Let $C_{14}^{\text{IV}}$ be the first Hermitian self-dual code over $\FF_4$ of length $14$ in \cite{HM}. 
	\begin{align*}
		f[14]&=3u^{14}+5u^{13}v+7u^{12}v^2+8u^{11}v^3+10u^{10}v^4+10u^9v^5+11u^8v^6\\
		&~+11u^7v^7+11u^6v^8+10u^5v^9+10u^4v^{10}+8u^3v^{11}+7u^2v^{12}\\
		&~+5uv^{13}+3v^{14}.
	\end{align*}
	Observe that
	\begin{align*}
		J_{C_{14}^{\text{IV}},1}&=\frac{1}{14}AW_{C_{14}^{\text{IV}}}(x,y)\\
		&=w(x^{13}+18x^{11}y^2+135x^9y^4+540x^7y^6+1215x^5y^8+1458x^3y^{10}\\
		&~+729xy^{12})+z(3x^{12}y+54x^{10}y^3+405x^8y^5+1620x^6y^7+3645x^4y^9\\
		&~+4374x^2y^{11}+2187y^{13}).
	\end{align*}
	The space of Jacobi polynomials $J_{C_{14}^{\text{IV}},T}$ with $|T|=2$ may be generated by the two polynomials
	\begin{align*}
		J^1_{C_{14}^{\text{IV}},2}&=w^2(x^{12}+18x^{10}y^2+135x^8y^4+540x^6y^6+1215x^4y^8+1458x^2y^{10}\\
		&~+729y^{12})+z^2(3x^{12}+54x^{10}y^2+405x^8y^4+1620x^6y^6+3645x^4y^8\\
		&~+4374x^2y^{10}+2187y^{12}),\\ 
		J^2_{C_{14}^{\text{IV}},2}&=w^2(x^{12}+15x^{10}y^2+90x^8y^4+270x^6y^6+405x^4y^8+243x^2y^{10})\\
		&~+wz(6x^{11}y+90x^9y^3+540x^7y^5+1620x^5y^7+2430x^3y^9\\
		&~+1458xy^{11})+z^2(9x^{10}y^2+135x^8y^4+810x^6y^6+2430x^4y^8\\
		&~+3645x^2y^{10}+2187y^{12}), 
	\end{align*}
	which gives packing and covering designs
	\begin{align*}
		D_1(14,2,2)&\leq 7 \leq C_0(14,2,2),\\
		D_6(14,4,2)&\leq 21 \leq C_1(14,4,2),\\
		D_{15}(14,6,2)&\leq 35 \leq C_5(14,6,2),\\
		D_{20}(14,8,2)&\leq 35 \leq C_{10}(14,8,2),\\
		D_{15}(14,10,2)&\leq 21 \leq C_{10}(14,10,2),\\
		D_6(14,12,2)&\leq 7 \leq C_5(14,12,2).
	\end{align*}
\end{ex}

\begin{ex}[length $16$]
	Let $C_{16}^{\text{IV}}$ be the $35$th Hermitian self-dual code over $\FF_4$ of length $16$ in \cite{HM}. 
	\begin{align*}
		f[16]&=3u^{16}+5u^{15}v+8u^{14}v^2+9u^{13}v^3+11u^{12}v^4+12u^{11}v^5+13u^{10}v^6\\
		&~+13u^9v^7+14u^8v^8+13u^7v^9+13u^6v^{10}+12u^5v^{11}+11u^4v^{12}\\
		&~+9u^3v^{13}+8u^2v^{14}+5uv^{15}+3v^{16}.
	\end{align*}
	Observe that
	\begin{align*}
		J_{C_{16}^{\text{IV}},1}&=\frac{1}{16}AW_{C_{16}^{\text{IV}}}(x,y)\\
		&=w(x^{15}+21x^{13}y^2+189x^{11}y^4+945x^9y^6+2835x^7y^8+5103x^5y^{10}\\
		&~+5103x^3y^{12}+2187xy^{14})+z(3x^{14}y+63x^{12}y^3+567x^{10}y^5\\
		&~+2835x^8y^7+8505x^6y^9+15309x^4y^{11}+15309x^2y^{13}+6561y^{15}).
	\end{align*}
	The space of Jacobi polynomials $J_{C_{16}^{\text{IV}},T}$ with $|T|=2$ is generated by the 
	following two polynomials:
	\begin{align*}
		J^1_{C_{16}^{\text{IV}},2}&=w^2(x^{14}+21x^{12}y^2+189x^{10}y^4+945x^8y^6+2835x^6y^8+5103x^4y^{10}\\
		&~+5103x^2y^{12}+2187y^{14})+z^2(3x^{14}+63x^{12}y^2+567x^{10}y^4\\
		&~+2835x^8y^6+8505x^6y^8+15309x^4y^{10}+15309x^2y^{12}+6561y^{14}),\\ 
		J^2_{C_{16}^{\text{IV}},2}&=w^2(x^{14}+18x^{12}y^2+135x^{10}y^4+540x^8y^6+1215x^6y^8+1458x^4y^{10}\\
		&~+729x^2y^{12})+wz(6x^{13}y+108x^{11}y^3+810x^9y^5+3240x^7y^7\\
		&~+7290x^5y^9+8748x^3y^{11}+4374xy^{13})+z^2(9x^{12}y^2+162x^{10}y^4\\
		&~+1215x^8y^6+4860x^6y^8+10935x^4y^{10}+13122x^2y^{12}+6561y^{14}), 
	\end{align*}
	which gives packing and covering designs
	\begin{align*}
		D_1(16,2,2)&\leq 8 \leq C_0(16,2,2),\\
		D_7(16,4,2)&\leq 28 \leq C_1(16,4,2),\\
		D_{21}(16,6,2)&\leq 56 \leq C_6(16,6,2),\\
		D_{35}(16,8,2)&\leq 70 \leq C_{15}(16,8,2),\\
		D_{35}(16,10,2)&\leq 56 \leq C_{20}(16,10,2),\\
		D_{21}(16,12,2)&\leq 28 \leq C_{15}(16,12,2),\\
		D_7(16,14,2)&\leq 8 \leq C_6(16,14,2).
	\end{align*}
\end{ex}

\begin{ex}[length $18$]
	Let $C_{18}^{\text{IV}}$ be the $225$th Hermitian self-dual code over $\FF_4$ of length $18$ in \cite{HM}. 
	\begin{align*}
		f[18]&=4u^{18}+6u^{17}v+9u^{16}v^2+11u^{15}v^3+13u^{14}v^4+14u^{13}v^5+16u^{12}v^6\\		
		&~+16u^{11}v^7+17u^{10}v^8+17u^9v^9+17u^8v^{10}+16u^7v^{11}+16u^6v^{12}\\
		&~+14u^5v^{13}+13u^4v^{14}+11u^3v^{15}+9u^2v^{16}+6uv^{17}+4v^{18}.
	\end{align*}
	Observe that
	\begin{align*}
		J_{C_{18}^{\text{IV}},1}&=\frac{1}{18}AW_{C_{18}^{\text{IV}}}(x,y)\\
		&=w(x^{17}+24x^{15}y^2+252x^{13}y^4+1512x^{11}y^6+5670x^9y^8\\
		&~+13608x^7y^{10}+20412x^5y^{12}+17496x^3y^{14}+6561xy^{16})\\
		&~+z(3x^{16}y+72x^{14}y^3+756x^{12}y^5+4536x^{10}y^7+17010x^8y^9\\
		&~+40824x^6y^{11}+61236x^4y^{13}+52488x^2y^{15}+19683y^{17}).
	\end{align*}
	The space of Jacobi polynomials $J_{C_{18}^{\text{IV}},T}$ with $|T|=2$ may be generated by the two polynomials
	\begin{align*}
		J^1_{C_{18}^{\text{IV}},2}&=w^2(24x^{14}y^2+252x^{12}y^4+1512x^{10}y^6+5670x^8y^8+13608x^6y^{10}\\
		&~+20412x^4y^{12}+17496x^2y^{14}+6561y^{16})+z^2(3x^{16}+72x^{14}y^2\\
		&~+756x^{12}y^4+4536x^{10}y^6+17010x^8y^8+40824x^6y^{10}\\
		&~+61236x^4y^{12}+52488x^2y^{14}),\\ 
		J^2_{C_{18}^{\text{IV}},2}&=w^2(21x^{14}y^2+189x^{12}y^4+945x^{10}y^6+2835x^8y^8+5103x^6y^{10}\\
		&~+5103x^4y^{12}+2187x^2y^{14})+wz(6x^{15}y+126x^{13}y^3+1134x^{11}y^5\\
		&~+5670x^9y^7+17010x^7y^9+x^5y^{11}+30618x^3y^{13}+13122xy^{15})\\
		&~+z^2(9x^{14}y^2+189x^{12}y^4+1701x^{10}y^6+8505x^8y^8+25515x^6y^{10}\\
		&~+45927x^4y^{12}+45927x^2y^{14}), 
	\end{align*}
	which gives packing and covering designs
	\begin{align*}
		D_1(18,2,2)&\leq 9 \leq C_0(18,2,2),\\
		D_8(18,4,2)&\leq 36 \leq C_1(18,4,2),\\
		D_{28}(18,6,2)&\leq 84 \leq C_7(18,6,2),\\
		D_{56}(18,8,2)&\leq 126 \leq C_{21}(18,8,2),\\
		D_{70}(18,10,2)&\leq 126 \leq C_{35}(18,10,2),\\
		D_{56}(18,12,2)&\leq 84 \leq C_{35}(18,12,2),\\
		D_{28}(18,14,2)&\leq 36 \leq C_{21}(18,14,2),\\
		D_8(18,16,2)&\leq 9 \leq C_7(18,16,2).
	\end{align*}
\end{ex}

\section{Concluding remarks}\label{Sec:Conclusion}

The $g$-th Jacobi polynomials of a binary code were introduced in~\cite{HOO}
which were generalized in~\cite{CMO2022} to the case of a non-binary code. 
This rises a natural question: is there any possibility to give 
a generalization of Theorem~\ref{Thm:Main2} for higher genus cases? 
We shall answer this question in~\cite{CHMOxxxx}. 
The study of this paper will be continued in~\cite{CIMTxxxx} to the case colored $t$-design, the idea that was introduce in~\cite{BRS2000}.
Moreover, we shall give the generalizations of the results in~\cite{CMO2022} 
for the $g$-th Jacobi polynomial with multiple reference vectors in~\cite{CMOxxxx}.

\section*{Acknowledgements}

This work was supported by JSPS KAKENHI (22K03277).



\end{document}